\font\smallit=cmti10
\font\smalltt=cmtt10
\renewcommand\section{\@startsection {section}{1}{\z@}
{-30pt \@plus -1ex \@minus -.2ex}
{2.3ex \@plus.2ex}
{\normalfont\normalsize\bfseries\boldmath}}
\renewcommand\subsection{\@startsection{subsection}{2}{\z@}
{-3.25ex\@plus -1ex \@minus -.2ex}
{1.5ex \@plus .2ex}
{\normalfont\normalsize\bfseries\boldmath}}
\renewcommand{\@seccntformat}[1]{\csname the#1\endcsname. }
\newtheorem{thm}{Theorem}
\newtheorem{lem}{Lemma}[section]
\newtheorem{prop}[lem]{Proposition}
\newtheorem{cor}[lem]{Corollary}
\theoremstyle{definition}
\newtheorem{dfn}[lem]{Definition}
\newtheorem{conj}[lem]{Conjecture}
\newtheorem{example}[lem]{Example}
\numberwithin{equation}{section}
\newcommand{\fl}[1]{\lfloor #1\rfloor}
\renewcommand{\arraystretch}{1.4}
\newcommand{\sI}{s_I}
\newcommand{\sII}{s_{II}}
\newcommand{\rand}{rand}
\newcommand{\opt}{opt}
\newcommand{\psIsII}{p_n^{s_I,s_{II}}}
\newcommand{\poptopt}{p_n^{\opt,\opt}}
\newcommand{\prandopt}{p_n^{\rand,\opt}}
\newcommand{\poptrand}{p_n^{\opt,\rand}}
\begin{document}

\begin{center}
\uppercase{\bf The number of optimal strategies in the 
Penney-Ante Game}
\vskip 20pt
{\bf Reed Phillips}\\
{\smallit Department of Mathematics, Rose-Hulman Institute of
Technology, Terre Haute, Indiana, USA}\\
{\tt phillirc@rose-hulman.edu}
\\
\vskip 10pt
{\bf A.J. Hildebrand}\\
{\smallit Department of Mathematics, University of Illinois, 
Urbana, Illinois, USA}\\
{\tt ajh@illinois.edu}
\end{center}
\vskip 20pt

\centerline{\smallit Received: , Revised: , Accepted: , Published: } %
\vskip 30pt

\centerline{\bf Abstract}

\noindent
In the Penney-Ante game, Player I chooses a head/tail string of a
predetermined length $n\ge3$. Player II, upon seeing Player I's choice,
chooses another head/tail string of the same length. A coin is then
tossed repeatedly and the player whose string appears first in the
resulting head/tail sequence wins the game. The Penney-Ante game has gained notoriety as a source of counterintuitive probabilities and
nontransitivity phenomena. For example, Player II can always choose a
string that beats the choice of Player I in the sense of being more
likely to appear first in a random head/tail sequence.

It is known that Player II has a unique optimal strategy
that maximizes her winning chances in this game.  On the other hand, for Player
I there exist multiple equivalent optimal strategies.  In this paper we
investigate the number, $c_n$, of optimal strategies for Player I, i.e.,
the number of head/tail strings of length $n$ that maximize the winning
probability for Player I assuming optimal play by Player
II.  We derive a recurrence relation for $c_n$ and use this to
obtain a sharp asymptotic estimate for $c_n$. In
particular, we show that, as $n\to\infty$, a fixed proportion $\alpha \approx
0.04062\dots$ of the $2^n$ head/tail strings of length $n$ are optimal
from Player I's perspective.

\pagestyle{myheadings}
\markright{\smalltt INTEGERS: 21 (2021)\hfill}
\thispagestyle{empty}
\baselineskip=12.875pt
\vskip 30pt

\section{Introduction and Statement of Results}
\label{sec:introduction}

\paragraph{The Penney-Ante Game.}
Penney-Ante is a coin-flipping game created some fifty years by Walter
Penney \cite{penney1969} and popularized by Martin Gardner
\cite{gardner1974}, who called it
``one of the most incredible of all nontransitive betting situations.'' 
The game involves two players, I and II, 
and in its usual formulation proceeds as follows:
\begin{quote}
\emph{Player I begins by choosing a head/tail string of a predetermined length
$n\ge3$.
Player II, upon seeing Player I's choice, chooses another
head/tail string of the same length $n$.
A coin is then tossed repeatedly until one of the two strings
chosen by the players appears. The player whose string appears first
wins the game.
}
\end{quote}

The Penney-Ante game is a source of many counterintuitive probabilities and
examples of nontransitivity.  Perhaps the most striking
feature of this game is that, given \emph{any} string of length at
least $3$, there always exists another string of the same length that beats the
given string in the sense of being more likely to appear first in an
infinite sequence of coin tosses.  As a consequence, Player II always
has the advantage in the Penney-Ante game as she can choose a string
that beats the string selected by Player I.

Table \ref{table:prob-matrix}, taken from Gardner \cite{gardner1974},
shows the pairwise winning probabilities in the Penney-Ante game
with strings of length $n=3$.  The entry indexed by row string $B$ and
column string $A$ represents the probability that $B$ appears before $A$ in a
random head/tail sequence, i.e., the probability that a player with
string $B$ wins over a player with string $A$.

\begin{table}[H]
	\begin{center}
	\begin{tabular}{|c||c|c|c|c|c|c|c|c|}
		\hline
	 	B $\backslash$ A 
		& HHH & HHT & HTH & HTT & THH & THT & TTH & TTT 
		\\
		\hline\hline
		HHH& & 1/2 & 2/5 & 2/5 & 1/8 & 5/12 & 3/10 & 1/2
		\\ \hline
		HHT& 1/2 & & 2/3 & 2/3 & 1/4 & 5/8 & 1/2 & 7/10
		\\ \hline
		HTH& 3/5 & 1/3 & & 1/2 & 1/2 & 1/2 & 3/8 & 7/12 
		\\ \hline
		HTT& 3/5 & 1/3 & 1/2 & & 1/2 & 1/2 & 3/4 & 7/8 
 		\\ \hline
		THH& 7/8 & 3/4 & 1/2 & 1/2 & & 1/2 & 1/3 & 3/5 
 		\\ \hline
		THT& 7/12& 3/8 & 1/2 & 1/2 & 1/2 & & 1/3 &  3/5 
 		\\ \hline
		TTH& 7/10& 1/2 & 5/8 & 1/4 & 2/3 & 2/3 & &  1/2 
 		\\ \hline
		TTT& 1/2 & 3/10& 5/12& 1/8 & 2/5 & 2/5 & 1/2 & 
		\\
		\hline
	\end{tabular}
	\end{center}
	\caption{Pairwise winning probabilities
	in the Penney-Ante game with strings of length~$3$.
	}
	\label{table:prob-matrix}
\end{table}

The probabilities in Table \ref{table:prob-matrix} can be computed by
elementary probabilistic arguments. For example, the fact that the string
$THH$ ``beats'' the string $HHH$  with probability $7/8$  can be seen by
observing that the only way for the string $HHH$ to appear \emph{before}
the string $THH$ in an infinite head/tail sequence (and thus win the
Penney-Ante game) is when the sequence \emph{starts out} with the string
$HHH$, an event that occurs with probability $1/8$.  

For strings of general length $n$, John Conway
(see Gardner \cite{gardner1974}) gave an ingenious algorithm for
computing the pairwise winning probabilities. The algorithm involves the
so-called \emph{Conway numbers}, which are positive integers associated
to any pair of finite head/tail strings and which measure the amount of
overlap between these two strings.  We will describe Conway's algorithm
in Section \ref{sec:conway}.

The Penney-Ante game and related questions have been studied in the
literature using a variety of methods including combinatorial approaches and
generating functions \cite{csirik1992,felix2006, guibas-odlyzko1981,
noonan-zeilberger1999}, martingales \cite{li1980}, Markov
chains \cite{blom-thorburn1982,chen-zane1979}, renewal theory
\cite{breen1985}, and gambling models \cite{pozdnyakov-kulldorff2006}. 
Some of the deepest work on the Penney-Ante game is due to Guibas and
Odlyzko \cite{guibas-odlyzko1981}. Motivated by applications to string
search algorithms, these authors framed the Penney-Ante game as a
problem in the theory of combinatorics of words. 
Using
a generating function approach, they considered the general problem of
counting strings of a given length over a finite alphabet that end in a
specified string and that do not contain any string from a given set of
``forbidden'' strings as substring.  The Penney-Ante game can be viewed
as a special case of this problem corresponding to sequences over the
two letter alphabet $\{H,T\}$ that end in a specified string $A$ and do not contain
another specified string $B$ of the same length as $A$. 

\paragraph{Optimal strategy for Player II.}
Perhaps the most natural question in the Penney-Ante game is the
following: 

\begin{quote}
\emph{Given a string selected by Player I, how should Player II choose
her string to maximize the probability of winning the Penney-Ante game?
In other words, given a string $A$, what is the ``best response
string'' $B$ to this  string?}
\end{quote}

For small values of $n$, 
such best response strings can be determined directly
by inspecting pairwise probability
tables such as Table \ref{table:prob-matrix}.  For example,
from the first column in Table \ref{table:prob-matrix} we see that the
maximal winning probability against the string $HHH$ is $7/8$, and that
$THH$ is the only string achieving this probability. Thus, $THH$ is the
unique best response string against the string $HHH$. 
Tables \ref{table:optimal-responses3} and \ref{table:optimal-responses4}
below show the best response strings for all strings of length $3$ and
$4$.  In each case, the string $B$ listed in the second column is the
\emph{unique} string that maximizes the winning probability for Player II
against the string $A$ in the first column, and the probability in the third column is
the corresponding maximal winning probability. 

\begin{table}[H]
	\begin{center}
		\begin{tabular}{|c|c|c|}
			\hline
			A & B & Probability \\
			\hline
			\hline
			HHH & THH & $7/8$
			\\ \hline
			HHT & THH & $3/4$
			\\ \hline
			HTH & HHT & $2/3$
			\\ \hline
			HTT & HHT & $2/3$
			\\ \hline
	\end{tabular}
	\hspace{2em}
		\begin{tabular}{|c|c|c|}
			\hline
			A & B & Probability \\
			\hline
			\hline
			THH & TTH & $2/3$
			\\ \hline
			THT & TTH & $2/3$
			\\ \hline
			TTH & HTT & $3/4$
			\\ \hline
			TTT & HTT & $7/8$
			\\
			\hline
	\end{tabular}
	\end{center}
	\caption{Best response strings for strings
	of length~$3$.}
	\label{table:optimal-responses3}
\end{table}

\begin{table}[H]
	\begin{center}
		\begin{tabular}{|c|c|c|}
			\hline
			A & B & Probability \\
			\hline
			\hline
HHHH & THHH & $15/16$\\ \hline
HHHT & THHH & $7/8$ \\ \hline
HHTH & HHHT & $2/3$\\ \hline
HHTT & HHHT & $2/3$\\ \hline
HTHH & THTH & $9/14$\\ \hline
HTHT & HHTH & $5/7$\\ \hline
HTTH & HHTT & $2/3$\\ \hline
HTTT & HHTT & $2/3$\\ \hline
	\end{tabular}
	\hspace{2em}
		\begin{tabular}{|c|c|c|}
			\hline
			A & B & Probability \\
			\hline
			\hline
THHH & TTHH & $2/3$\\ \hline
THHT & TTHH & $2/3$\\ \hline 
THTH & TTHT & $5/7$\\ \hline
THTT & HTHT & $9/14$\\ \hline
TTHH & TTTH & $2/3$\\ \hline
TTHT & TTTH & $2/3$\\ \hline
TTTH & HTTT & $7/8$\\ \hline
TTTT & HTTT & $15/16$\\ \hline
	\end{tabular}
	\end{center}
	\caption{Best response strings for strings
	of length~$4$.}
	\label{table:optimal-responses4}
\end{table}

For strings of general length $n\ge3$, Guibas and Odlyzko
\cite{guibas-odlyzko1981} gave a simple algorithm to determine
the best response string  up to the choice of a single initial letter:
Namely, given a string $A$ of length $n$, they showed that the
best response string is of the form $HA'$ or $TA'$, where
$A'$ is the string
consisting of the first $n-1$ letters of $A$. Thus, for example, the best
response string to \fbox{$HHTHT$}$\,T$ is of the form $H\,$\fbox{$HHTHT$} or
$T$\,\fbox{$HHTHT$}.
Guibas and Odlyzko went on to conjecture that, among the two possible
forms of the best response string, there is always one that performs
strictly better than the other in the Penney-Ante game. This conjecture
was proved by Csirik \cite{csirik1992}. Felix \cite{felix2006} gave
another proof of this result and also provided an algorithm to determine
which of the two candidates for the best response string identified by
Guibas and Odlyzko is the true best response. 

It follows from these results that Player II always has a unique optimal
response strategy in the Penney-Ante game.

\paragraph{Optimal strategies for Player I.}
We can ask similarly for optimal strategies from Player I's
perspective:  

\begin{quote}
\emph{Which string should Player I choose to maximize his chances of winning
the Penney-Ante game assuming optimal play by Player II?  Equivalently,  
which string $A$ \emph{minimizes} the probability that the best response
string $B$ to $A$ wins the game?} 
\end{quote}

As it turns out, the answer to this question is markedly different from
that about Player II's optimal strategy.  While Player II always has a
unique optimal strategy, Player I has many optimal strategies that are
all equivalent in the sense of securing the same winning odds for Player
I.  Indeed, Table \ref{table:optimal-responses3} shows that, when $n=3$,
the \emph{smallest} winning probability for Player II under optimal play
is $2/3$, and that this probability is achieved when Player I chooses one
the four strings $HTH$, $HTT$, $THH$, and $THT$.  Thus, these four strings all
tie as optimal strategies for Player I.  Similarly, from Table
\ref{table:optimal-responses4} we see that, when $n=4$, Player I has
exactly two optimal strategies, given by the strings $HTHH$ and $THTT$. 

For strings of length $n\ge 5$, Csirik \cite{csirik1992}
characterized all optimal strings  for Player I in terms of Conway
numbers; see
Proposition \ref{prop:csirik-theorem3} below.

\paragraph{The number of optimal strategies for Player I.}
As mentioned, Player I has in general many optimal strategies, each
yielding the same maximal winning probability. This raises the
following question:
\begin{quote}
\emph{How many optimal strategies are there for Player I? That is, given
$n\ge 3$, how many strings of length $n$ are there that maximize
the winning probability for Player I in the Penney-Ante game 
assuming optimal play by Player II?
}
\end{quote}
This question will be the main focus of this paper.  Letting $c_n$ denote the
number of ``optimal'' strings for Player I described in this question,
we are interested in determining the behavior and properties of 
the sequence $\{c_n\}$.

As mentioned above, for $n=3$ there are four strings that tie as optimal
strategies, while for $n=4$ there are two such strings. Thus we have
$c_3=4$ and $c_4=2$.  Table \ref{table:cn-initial-values} provides
further values of $c_n$. 

\begin{table}[H]
\begin{center}
\renewcommand{\arraystretch}{1.3}
\begin{tabular}{|c|c|c|c|c|c|c|c|c|c|c|c|c|c|}
\hline
$n$& 3 & 4 & 5 & 6 & 7 & 8 & 9&10&11&12&13&14&15
\\
\hline
$c_n$ & 4 & 2 & 2 & 2 & 6 & 10 & 22 & 42 & 86 & 166 & 338 & 666 & 1342
\\
\hline
\end{tabular}
\caption{Values of $c_n$, the number of optimal strategies for Player I 
in the Penney-Ante game with strings of length $n$.}
\label{table:cn-initial-values}
\end{center}
\end{table}

The sequence $\{c_n\}$ shown in Table \ref{table:cn-initial-values}
does not seem to have a closed form, and the sequence is also not
listed in the \emph{On-Line Encyclopedia of Integer Sequences}
\cite{oeis}. Thus, it is likely that this sequence has not
occurred before in some other context.  Our main goal in this paper is to
gain a more complete understanding of this sequence, its properties, and
its asymptotic behavior.

We first use Csirik's characterization of the optimal
strings for Player I to derive a recurrence relation satisfied by $c_n$.


\begin{thm}
	\label{thm:cn-recurrence}
The number $c_n$ of optimal strings for Player I satisfies 
\begin{align}
    \label{eq:cn-recurrence}
	c_n=2c_{n-1}-(-1)^nc_{\fl{n/2}+1}\quad (n\ge 6).
\end{align}
\end{thm}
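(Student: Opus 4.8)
The plan is to reduce the counting of Player I's optimal strings to an explicit combinatorial condition via Csirik's characterization (Proposition \ref{prop:csirik-theorem3}, stated below but whose content I anticipate describes optimal strings in terms of Conway numbers, equivalently in terms of the autocorrelation/overlap structure of the string). First I would translate that characterization into a statement purely about the correlation polynomial of a string $A$ of length $n$: an optimal string is one whose self-overlaps (the set of $k<n$ for which the length-$k$ prefix equals the length-$k$ suffix) satisfy whatever extremal property makes the best-response winning probability minimal. The key observation I expect to exploit is that the optimality condition is essentially \emph{local} at the two ends of the string, so that appending or prepending a symbol interacts with the correlation set in a controlled way. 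This should let me set up a bijective or sign-weighted correspondence between optimal strings of length $n$ and optimal strings of length $n-1$.

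The heart of the argument is the factor of $2$ in $c_n = 2c_{n-1} - (-1)^n c_{\fl{n/2}+1}$. I would try to produce a near-two-to-one map from optimal strings of length $n$ to optimal strings of length $n-1$, most plausibly by deleting one distinguished end symbol (say the last one). Generically each optimal string of length $n-1$ should have exactly two optimal extensions of length $n$, accounting for the leading $2c_{n-1}$ term. The correction term $-(-1)^n c_{\fl{n/2}+1}$ must then count the exceptional strings where this two-to-one behavior fails --- either a length-$(n-1)$ optimal string with only one (or no) valid extension, or extensions that create a new overlap not present at length $n-1$. The appearance of $c_{\fl{n/2}+1}$ strongly suggests that these exceptional cases are governed by strings possessing a \emph{long} self-overlap, specifically a period that forces the string to be essentially determined by its first $\fl{n/2}+1$ symbols; such ``half-periodic'' strings are counted by $c_{\fl{n/2}+1}$ after identifying their free part with a shorter optimal string. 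The alternating sign $(-1)^n$ should emerge from the parity distinction between $n$ even and $n$ odd in how a central overlap of length about $n/2$ can occur (whether the two halves can overlap in the middle symbol or not).

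I would organize the proof as follows. First, restate Csirik's optimality condition as a clean predicate $P_n(A)$ on strings. Second, define the extension/deletion map and prove that for all but a controlled exceptional set, each optimal string of length $n-1$ lifts to exactly two optimal strings of length $n$; this establishes the main term. Third, isolate the exceptional strings, show they are exactly the optimal strings exhibiting a self-overlap of the critical length near $n/2$, and set up a bijection between these and optimal strings of length $\fl{n/2}+1$, tracking the sign carefully according to the parity of $n$. Finally, combine the main term and correction to obtain \eqref{eq:cn-recurrence}, noting the restriction $n\ge 6$ is needed so that the critical overlap length $\fl{n/2}+1$ is strictly less than $n-1$ and the base values $c_3,c_4,c_5$ from Table \ref{table:cn-initial-values} lie outside the recurrence's scope.

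The main obstacle I anticipate is pinning down precisely \emph{which} strings are exceptional and proving the bijection with the length-$\fl{n/2}+1$ optimal strings is exact, with the correct sign. The delicate point is that creating or destroying a self-overlap at the critical length changes the Conway-number profile in a way that could either preserve or break optimality, and the parity of $n$ controls whether the two overlapping copies of the half-string share a middle symbol or abut cleanly. Getting the sign $(-1)^n$ right, rather than merely the magnitude $c_{\fl{n/2}+1}$, will require a careful case analysis of how the autocorrelation behaves under the extension map separately for even and odd $n$, and this is where I expect most of the real work --- and most of the risk of an off-by-one or parity slip --- to reside.
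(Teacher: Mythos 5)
Your outline has the right silhouette --- reduce to Csirik's characterization, find a near-two-to-one correspondence between lengths $n-1$ and $n$, and account for the defect by ``half-periodic'' strings counted by $c_{\fl{n/2}+1}$ --- but the mechanism you propose for realizing it would not work, and the step you yourself flag as the ``main obstacle'' is precisely where the paper's actual content lies. Csirik's condition (Proposition \ref{prop:csirik-theorem3}) forces an optimal string to have a prescribed two-letter prefix and three-letter suffix and, after stripping the last letter, an autocorrelation equal to exactly $2^{n-2}+1$. Because both ends are rigidly constrained, deleting or appending an \emph{end} symbol destroys the required form: e.g.\ a string counted by $c^*_m$ (prefix $HT$, suffix $TH$, autocorrelation $10\dots01$) truncates to a string ending in $T$ and beginning with $H$, which cannot satisfy the analogous condition at length $m-1$. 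The paper instead first extracts a factor of $2$ from the $H$/$T$ symmetry of the two Csirik forms (giving $c_n=2c^*_{n-1}$), and then obtains the recurrence by \emph{inserting one or two bits in the exact middle} of a string counted by $c^*_{2m}$: the point of the middle insertion is that the last $m$ bits of the autocorrelation depend only on comparing the second half against the first half, so they are preserved under the insertion.

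What makes this work --- and what your proposal does not anticipate --- is a rigidity lemma (Lemma \ref{lem:correctAutocorrelation}): a string of length $2m$, $2m+1$, or $2m+2$ whose autocorrelation ends in the $m$ bits $0\dots01$ can have only one of two possible autocorrelations in total, namely $10\dots01$ or the ``half-periodic'' pattern with an extra $1$ near the middle. This is what pins down the image of the insertion map as exactly (optimal strings of the longer length) $\cup$ (strings in bijection with those counted by $c^*_{m+1}$), yielding the exact identities $2c^*_{2m}=c^*_{2m+1}+c^*_{m+1}$ and $4c^*_{2m}=c^*_{2m+2}+c^*_{m+1}$. Note also that the sign $(-1)^n$ does not arise from a parity case analysis of how the central overlap sits, as you conjecture: the exceptional term enters with a $+$ sign on the same side as the longer-length count in \emph{both} identities, and the alternation emerges only from the algebra of solving the first identity for $c^*_{2m+1}$ and substituting it into the second. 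Without the middle-insertion idea and the rigidity lemma, your plan has no way to prove that the generic fiber of your map has size exactly two, nor that the exceptional set is counted exactly (rather than merely bounded) by $c_{\fl{n/2}+1}$, so the argument as proposed has a genuine gap at its central step.
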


We next use this relation to determine the asymptotic behavior of $c_n$.

\begin{thm}
        \label{thm:cn-asymptotic}
        As $n\to\infty$, we have
\begin{equation}
\label{eq:cn-asymptotic}
                c_n=
                \begin{cases}
                \alpha 2^n + O\left(2^{n/4}\right) 
                & \text{if $n$ is even,}
                \\
                \alpha \left(2^n + 2^{\fl{n/2}+1}\right)+ O(2^{n/4})
                & \text{if $n$ is odd,}
\end{cases}
        \end{equation}
where $\alpha$ is a positive constant with approximate numerical value
\begin{equation}
\label{eq:alpha-values}
                \alpha=0.040602\dots
\end{equation}
In particular, as $n\to\infty$, a fixed proportion $\alpha$
of all $2^n$ head/tail strings of length $n$ represent optimal
strategies for Player I. 
\end{thm}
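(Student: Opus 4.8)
The plan is to work with the normalized quantities $d_n:=c_n/2^n$, which by Table~\ref{table:cn-initial-values} lie in $(0,1)$, and to read off the asymptotics of $c_n$ from the limiting behavior of $d_n$. Dividing the recurrence \eqref{eq:cn-recurrence} by $2^n$ turns it into a telescoping difference: writing $m_n:=\fl{n/2}+1$, one gets
\[
 d_n-d_{n-1}=-(-1)^n 2^{m_n-n}d_{m_n}\qquad(n\ge 6).
\]
Since $0\le d_{m_n}\le 1$ and $m_n-n\le 1-n/2$, the right-hand side is $O(2^{-n/2})$, so the series $\sum_n (d_n-d_{n-1})$ converges absolutely. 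Hence $d_n$ tends to a limit $\alpha:=\lim_{n\to\infty}d_n$, and summing the differences from $n+1$ to $\infty$ yields the exact tail representation
\[
 d_n-\alpha=\sum_{k=n+1}^{\infty}(-1)^k 2^{m_k-k}d_{m_k}.
\]
This identity is the engine for everything that follows.

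Next I would extract the second-order term by splitting $d_{m_k}=\alpha+(d_{m_k}-\alpha)$, so that $d_n-\alpha=\alpha S_n+R_n$ with
\[
 S_n:=\sum_{k=n+1}^{\infty}(-1)^k 2^{m_k-k},\qquad R_n:=\sum_{k=n+1}^{\infty}(-1)^k 2^{m_k-k}\bigl(d_{m_k}-\alpha\bigr).
\]
The geometric sum $S_n$ can be evaluated exactly. Writing $a_k=(-1)^k2^{m_k-k}$, a direct check gives $a_{2j}=2^{1-j}$ and $a_{2j+1}=-2^{-j}$, so consecutive pairs collapse, $a_{2j}+a_{2j+1}=2^{-j}$. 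Regrouping the (absolutely convergent) series accordingly, the parity of $n$ decides whether the first odd term is left unpaired: one finds $S_n=0$ when $n$ is even and $S_n=2^{(1-n)/2}$ when $n$ is odd. Multiplying by $2^n$, the contribution $\alpha 2^n S_n$ is either $0$ or $\alpha 2^{\fl{n/2}+1}$, which is exactly the parity-dependent second term appearing in \eqref{eq:cn-asymptotic}.

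The main work — and the one genuinely delicate point — is to show that the remainder satisfies $R_n=O(2^{-3n/4})$, since a naive bound only gives $O(2^{-n/2})$, which would swamp the even-$n$ statement. Here I would bootstrap. A first, crude pass uses $|d_{m_k}|\le 1$ in the tail representation to obtain the uniform estimate $|d_m-\alpha|\le C\,2^{-m/2}$ for an absolute constant $C$. Feeding this improved decay back into $R_n$, together with $m_k\ge k/2$ (hence $2^{-m_k/2}\le 2^{-k/4}$) and $2^{m_k-k}\le 2^{1-k/2}$, bounds each summand by a constant times $2^{-3k/4}$, and summing the resulting geometric tail gives $R_n=O(2^{-3n/4})$. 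Combining $d_n=\alpha+\alpha S_n+R_n$ and multiplying through by $2^n$ then yields \eqref{eq:cn-asymptotic} in both parities. Finally, positivity and the numerical value of $\alpha$ follow from the same tail representation: for even $n$ one simply has $\alpha=d_n+O(2^{-3n/4})$, so computing $c_n$ from the recurrence for a moderately large even $n$ and dividing by $2^n$ pins down $\alpha=0.040602\dots>0$ to any desired precision, with the geometric bound on $R_n$ certifying the error.
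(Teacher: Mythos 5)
Your proposal is correct, and its engine is the same as the paper's: normalize to $d_n=c_n/2^n$, observe that the recurrence makes $d_n-d_{n-1}=O(2^{-n/2})$ so that $d_n\to\alpha$, and then bootstrap the crude estimate $d_n=\alpha+O(2^{-n/2})$ once more to reach the $O(2^{-3n/4})$ level needed for \eqref{eq:cn-asymptotic}. The packaging differs in two ways worth noting. First, the paper extracts the second-order term by iterating the recurrence into a two-step relation $d_{2m}=d_{2m-2}+2^{-m+1}(d_m-d_{m+1})$ and treating odd indices separately, whereas you write the exact tail identity $d_n-\alpha=\sum_{k>n}(-1)^k2^{m_k-k}d_{m_k}$ and split it as $\alpha S_n+R_n$, evaluating the geometric series $S_n$ in closed form ($0$ for even $n$, $2^{(1-n)/2}$ for odd $n$); your computation of $S_n$ checks out and cleanly explains where the parity-dependent term $\alpha\,2^{\fl{n/2}+1}$ comes from, which the paper's version leaves somewhat implicit. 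Second, for the positivity of $\alpha$ the paper proves an a priori lower bound $c_n\ge 2^{n-6}$ (its Lemma \ref{lem:cn-initial-bounds}), giving $\alpha\ge 1/64$ without any computation, while you certify $\alpha>0$ numerically from an effective form of the error bound; this is legitimate since all your constants are explicit (the bound $d_m\le 1$ is trivial because $c_n$ counts a nonempty subset of the $2^n$ strings, so you do not even need the table for it), but it is slightly less self-contained. Both routes deliver the theorem; yours arguably makes the structure of the asymptotic expansion more transparent and would generalize more easily to extracting further terms.
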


The asymptotic estimate \eqref{eq:cn-asymptotic}
has an interesting interpretation
in terms of the binary representations of $c_n$ and $\alpha$:
Letting $\alpha=0.\alpha_1\alpha_2\dots$ denote the binary expansion of
$\alpha$, we have
$\alpha 2^n=\alpha_1\alpha_2\dots \alpha_n.\alpha_{n+1}\dots$ 
Thus the integer part of $\alpha 2^n$, the main term in the estimate
\eqref{eq:cn-asymptotic}, 
consists of the first $n$ binary digits
of the constant $\alpha$.  The other terms on the right 
of \eqref{eq:cn-asymptotic} are of order
at most $O(2^{n/2})$ and thus affect only the last $n/2$ or so
binary bits of $c_n$. Consequently, approximately the first half of the binary
digits of $c_n$ coincide with the binary digits of $\alpha$. 
This behavior is illustrated in Table \ref{table:cn-binary-values},
which shows the binary expansions of the values $c_n$ for $5\le n\le
25$.

\begin{table}[H]
\begin{center}
\renewcommand{\arraystretch}{1.0}
\addtolength{\tabcolsep}{3pt}
\begin{tabular}{|c|r|l|}
\hline
$n$ & $c_n$ & $c_n$ in Binary
\\
\hline
5  &  2  &  10  \\
6  &  2  &  10  \\
7  &  6  &  110  \\
8  &  10  &  1010  \\
9  &  22  &  10110  \\
10  &  42  &  101010  \\
11  &  86  &  1010110  \\
12  &  166  &  10100110  \\
13  &  338  &  101010010  \\
14  &  666  &  1010011010  \\
15  &  1342  &  10100111110  \\
16  &  2662  &  101001100110  \\
17  &  5346  &  1010011100010  \\
18  &  10650  &  10100110011010  \\
19  &  21342  &  101001101011110  \\
20  &  42598  &  1010011001100110  \\
21  &  85282  &  10100110100100010  \\
22  &  170398  &  101001100110011110  \\
23  &  340962  &  1010011001111100010  \\
24  &  681586  &  10100110011001110010  \\
25  &  1363510  &  101001100111000110110  \\
\hline
\end{tabular}
\caption{Decimal and binary values of $c_n$.}
\label{table:cn-binary-values}
\end{center}
\end{table}

In fact, we have the following \emph{exact} formula for $\alpha$ in terms
of an infinite series involving the numbers $c_n$.

\begin{thm}
        \label{thm:alpha-series}
	The constant $\alpha$ defined by \eqref{eq:cn-asymptotic} satisfies 
	\begin{equation}
        \label{eq:alpha-series}
        \alpha = \frac{1}{16}-2\sum_{n=4}^\infty \frac{c_n}{4^{n}}.
    \end{equation}
\end{thm}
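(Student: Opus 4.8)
The plan is to read $\alpha$ off the recurrence \eqref{eq:cn-recurrence} via a telescoping argument, invoking Theorem \ref{thm:cn-asymptotic} only to identify the relevant limit. Set $a_n=c_n/2^n$. Dividing \eqref{eq:cn-recurrence} by $2^n$ converts it into the telescoping identity
\[
a_n-a_{n-1}=-(-1)^n\,\frac{c_{\fl{n/2}+1}}{2^n}\qquad(n\ge 6).
\]
Theorem \ref{thm:cn-asymptotic} gives $a_n\to\alpha$ as $n\to\infty$ (in the odd case the correction term contributes $\alpha\,2^{\fl{n/2}+1-n}\to0$), so summing the identity over $n\ge6$ telescopes to
\[
\alpha-a_5=-\sum_{n=6}^\infty(-1)^n\,\frac{c_{\fl{n/2}+1}}{2^n}.
\]
The crucial numerical coincidence is that the recurrence is valid exactly from $n=6$ onward, and $a_5=c_5/2^5=2/32=1/16$; this is the source of the constant $1/16$ in \eqref{eq:alpha-series}.

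It then remains to collapse the alternating series into the form appearing in \eqref{eq:alpha-series}. Here I would pair the consecutive indices $n=2k$ and $n=2k+1$, both of which yield subscript $\fl{n/2}+1=k+1$: the even index contributes $c_{k+1}/4^k$ and the odd index contributes $-c_{k+1}/(2\cdot4^k)$, so each pair sums to $c_{k+1}/(2\cdot4^k)$. Because $n\ge6$ forces $k\ge3$ for both parities, the even and odd subsequences line up with no leftover boundary terms. Reindexing by $m=k+1$ gives
\[
\sum_{n=6}^\infty(-1)^n\,\frac{c_{\fl{n/2}+1}}{2^n}
=\frac12\sum_{k=3}^\infty\frac{c_{k+1}}{4^k}
=2\sum_{m=4}^\infty\frac{c_m}{4^m},
\]
and substituting into the previous display yields \eqref{eq:alpha-series}. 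The regrouping and reindexing are legitimate because $c_m\le2^m$ forces the general term to be $O(2^{-n/2})$, so the series converges absolutely.

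I expect the only genuinely delicate point to be the bookkeeping in the pairing step: confirming that the even indices $n=2k$ and the odd indices $n=2k+1$ both begin at $k=3$ (so that nothing is dropped or double-counted at the lower end), and correctly tracking the factors of $2$ and $4$ through the substitution $m=k+1$, where $4^{-k}=4\cdot4^{-m}$ produces the overall coefficient $2$. Everything else is routine once the telescoping identity and the anchor value $a_5=1/16$ are identified.
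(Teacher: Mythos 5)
Your proof is correct. The telescoping identity $a_n-a_{n-1}=-(-1)^n c_{\fl{n/2}+1}/2^n$ is an exact restatement of \eqref{eq:cn-recurrence}, the limit $a_n\to\alpha$ follows from Theorem \ref{thm:cn-asymptotic} exactly as you say (the odd-case correction $\alpha 2^{\fl{n/2}+1-n}$ vanishes), absolute convergence justifies the pairing, and the bookkeeping checks out: both parities start at $k=3$, each pair contributes $c_{k+1}/(2\cdot 4^k)$, and the substitution $m=k+1$ produces the factor $2$. The anchor $a_5=1/16$ is the right source of the constant. Your route differs from the paper's in its decomposition: the paper first iterates the recurrence once to get $c_{2m+1}=4c_{2m-1}-c_{m+1}$, proves by induction the closed finite-sum formula $c_{2m+1}=4^{m-2}c_5-\sum_{i=4}^{m+1}c_i4^{m+1-i}$ (Lemma \ref{lem:alpha-cn-sum}), divides by $2^{2m+1}$, and lets $m\to\infty$ along odd indices only. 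That is in effect your telescoped partial sum with the even/odd pairing already built in, so the two arguments are close cousins; but yours works uniformly in $n$ from the first-order recurrence and makes the role of the alternating sign and of the starting index $n=6$ more transparent, while the paper's finite identity hands you an exact truncation error for computing $\alpha$ without any limiting argument until the last step. Either way the proof is complete.
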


This series allows for an efficient computation of the constant
$\alpha$.  Indeed, since, by Theorem \ref{thm:cn-asymptotic}, $c_n$ is of order
$O(2^n)$, the terms in the series \eqref{eq:alpha-series} decay at rate
$2^{-n}$ and truncating this series after $n$ terms gives $\alpha$
within an accuracy of order $O(2^{-n})$.

\paragraph{Optimal strategies in the ``flipped'' Penney-Ante game.}
A natural question that does not seem to have received attention in the
literature is whether analogous results hold  in a ``flipped''
Penney-Ante game where the player whose string appears \emph{last} wins the
game.  

Clearly, the odds of one string of length $n$ winning over another such string
in the flipped Penney-Ante game are the reciprocals of the odds for the standard
Penney-Ante game.  Similarly, the matrix of pairwise winning
probabilities for the  flipped game is the transpose of the corresponding
matrix for the standard game.  Because of this symmetry, one might expect
that the properties of the flipped game are largely analogous to those of
the standard game.  Surprisingly, this is not the case.  We will show:

\begin{thm}
	\label{thm:flipped-optimal-moves} 
Let $n\ge 3$, and consider the flipped  Penney-Ante game on strings of
length $n$. Then there are exactly two optimal strategies for Player I,
namely the strings $HH\dots H$ and $TT\dots T$ consisting of $n$ heads or $n$ tails.
Under these strategies, Player I wins with probability $1/2$.
\end{thm}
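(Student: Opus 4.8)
The plan is to translate the flipped game into the language of the standard pairwise probabilities and then to reduce the theorem to two facts about Conway's correlation numbers. Write $p(B,A)$ for the probability that $B$ appears before $A$ in a random sequence, and note that in the flipped game Player~II, having seen Player~I's string $A$, picks $B\ne A$ so as to maximize the chance that her string appears \emph{last}, i.e.\ that $A$ appears first; this chance is exactly $p(A,B)$. Hence Player~I wins with probability $1-\max_{B\ne A}p(A,B)=\min_{B\ne A}p(B,A)$, so $A$ is optimal for Player~I precisely when $\max_{B\ne A}p(A,B)$ is minimized. The theorem is therefore equivalent to the statement that this minimum equals $1/2$ and is attained only by the two constant strings $H\cdots H$ and $T\cdots T$. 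Throughout I would use Conway's odds formula, which says that the odds that $B$ precedes $A$ equal $\bigl[(A,A)-(A,B)\bigr]:\bigl[(B,B)-(B,A)\bigr]$, where $(X,Y)$ denotes the Conway correlation of $X$ over $Y$; in particular, ``$A$ beats nobody'' (meaning $p(A,B)\le 1/2$ for every $B$) is equivalent to $(B,B)-(B,A)\le(A,A)-(A,B)$ for all $B$.

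First I would prove that the all-heads string $C=H\cdots H$ beats nobody, and that it ties the all-tails string, so that Player~I wins exactly $1/2$ with $C$ (and, by the symmetry interchanging $H$ and $T$, with $T\cdots T$). The needed correlations are elementary: $(C,C)=2^n-1$ is the largest autocorrelation of any length-$n$ string, while $(C,B)=2^{b}-1$ and $(B,C)=2^{b'}-1$, where $b$ and $b'$ are the numbers of leading and trailing heads of $B$. Feeding these into the odds formula reduces the inequality $p(B,C)\ge 1/2$ to the combinatorial estimate
\[
 2^n-1-(B,B)\;\ge\;2^{b}-2^{b'} .
\]
Here the left-hand side equals $\sum_{\ell\in N}2^{\ell-1}$, where $N$ is the set of lengths $\ell\in\{1,\dots,n-1\}$ for which $B$ has no border (no length-$\ell$ prefix equal to its length-$\ell$ suffix). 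For each $\ell$ with $b'<\ell\le b$ the length-$\ell$ prefix of $B$ is $H^{\ell}$, whereas its length-$\ell$ suffix necessarily contains a tail; hence $\ell\in N$, and summing the corresponding powers of two already accounts for $\sum_{\ell=b'+1}^{b}2^{\ell-1}=2^{b}-2^{b'}$. This gives $p(B,C)\ge1/2$ for every $B\ne C$, and taking $B=T\cdots T$ (for which $p(B,C)=1/2$ by the head/tail symmetry) shows the value $1/2$ is attained; thus Player~I wins exactly $1/2$ with a constant string.

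Next I would show that every \emph{non-constant} $A$ beats somebody, so that $\max_{B}p(A,B)>1/2$ and Player~I's winning probability with $A$ is strictly below $1/2$. Arguing by contradiction, suppose $A$ beats nobody. Then $p(A,C)\le 1/2$ and $p(A,T\cdots T)\le 1/2$, while the previous paragraph (and its $H\leftrightarrow T$ mirror) give the reverse inequalities; hence $A$ ties \emph{both} constant strings. Writing out the equality in the odds formula, the tie $p(A,C)=1/2$ forces $2^n-1-(A,A)=2^{b}-2^{b'}$ with $b,b'$ the leading and trailing head counts of $A$, and the tie with $T\cdots T$ forces $2^n-1-(A,A)=2^{t}-2^{t'}$ with $t,t'$ the leading and trailing tail counts. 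If $A$ begins with $H$ then $t=0$, so the second identity reads $2^n-1-(A,A)=1-2^{t'}\le 0$; but the left-hand side is nonnegative because $(A,A)\le 2^n-1$, so both sides vanish and $(A,A)=2^n-1$. Since the autocorrelation attains its maximum $2^n-1$ only for the constant strings, $A$ is constant, a contradiction. The case where $A$ begins with $T$ is symmetric.

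Combining the two parts, the constant strings are exactly the strings that beat nobody; they are the unique minimizers of $\max_{B}p(A,B)$, with common value $1/2$, and Player~I therefore wins with probability exactly $1/2$ under precisely the two strategies $H\cdots H$ and $T\cdots T$. I expect the main obstacle to be the combinatorial inequality of the second paragraph: establishing that the all-heads pattern is the unique weakest pattern rests on matching the missing-border lengths of $B$ against its leading and trailing head counts, and it is this head/tail bookkeeping --- rather than the elementary odds computation --- that carries the weight of the argument. A secondary point requiring care is the $H\leftrightarrow T$ symmetry in the final step: it must be invoked for the \emph{pattern compared against} while keeping $A$ and its border data fixed, since a string's border lengths are unchanged by swapping $H$ and $T$ but its leading and trailing run-lengths are not.
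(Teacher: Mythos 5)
Your proposal is correct, and its overall architecture coincides with the paper's: reduce optimality for Player I to minimizing $\max_{B\ne A}p(A,B)$, show that the constant strings achieve the value $1/2$ and ``beat nobody,'' and show that every non-constant string beats somebody. The preliminary computations are also the same, namely $C(H\cdots H,H\cdots H)=2^n-1$, $C(H\cdots H,B)=2^{b}-1$ and $C(B,H\cdots H)=2^{b'}-1$ with $b,b'$ the leading and trailing head counts of $B$. The two load-bearing steps, however, are argued differently. First, to prove $p(B,H\cdots H)\ge 1/2$ for all $B$, the paper observes that a non-constant $B$ has second autocorrelation bit $0$, hence $C(B,B)\le 2^n-1-2^{n-2}$, and a short case analysis then isolates the equality cases $B=T\cdots T$ and $B=H\cdots HT$; you instead expand $2^n-1-C(B,B)=\sum_{\ell\in N}2^{\ell-1}$ over the set $N$ of missing border lengths and note that every $\ell$ with $b'<\ell\le b$ lies in $N$, which dominates $2^{b}-2^{b'}$ term by term. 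This is a cleaner derivation of the inequality, though it does not identify the full set of equality cases (which you do not need). Second, to rule out non-constant optimal strings, the paper uses precisely that equality characterization: a non-constant $A$ not strictly beaten by $H\cdots H$ must equal $H\cdots HT$, one not strictly beaten by $T\cdots T$ must equal $T\cdots TH$, and no string is both. You instead force $A$ to tie both constant strings, yielding $2^n-1-C(A,A)=2^{b}-2^{b'}=2^{t}-2^{t'}$ with one of $b,t$ equal to zero, whence $C(A,A)=2^n-1$ and $A$ is constant, a contradiction. Both routes are sound; yours trades the explicit bookkeeping of equality cases for a short arithmetic contradiction, at the price of having to extract the exact tie conditions from Conway's formula rather than only the inequality.
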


\paragraph{Outline of the paper.} In Section \ref{sec:conway} we describe
Conway's algorithm for computing pairwise winning probabilities in the
Penney-Ante game, and we prove some basic properties of
the Conway numbers on which this algorithm is based.
In Sections \ref{sec:theorem1-proof}--\ref{sec:theorem4-proof}, we prove
our main results, Theorems
\ref{thm:cn-recurrence}--\ref{thm:flipped-optimal-moves}.
We conclude in Section \ref{sec:further-results} by presenting some
open problems and conjectures related to these results.  In particular, we consider the question of how much of a penalty each player incurs by playing randomly (i.e., choosing one of the $2^n$ strings at random) instead of optimally.

\section{Conway Numbers and Conway's Algorithm}
\label{sec:conway}

In this section we describe Conway's algorithm for computing pairwise
winning probabilities in the Penney-Ante game and prove some auxiliary
results. 

In what follows all strings are assumed to be finite binary strings 
over the symbols $H$ and $T$.  We use uppercase letters 
to denote such strings and lowercase letters to denote
the individual bits in these strings;  for example, $A=a_1a_2\dots a_n$ 
denotes a generic string of length $n$ over the alphabet $\{H,T\}$.

Conway's algorithm is based on the concept of \emph{Conway numbers},
which are defined as follows (see, e.g., Gardner \cite{gardner1974}, 
or Guibas and Odlyzko \cite{guibas-odlyzko1981}).

\begin{dfn}[Conway numbers]
\label{def:conway-number}
Let $A=a_1\dots a_n$ and $B=b_1\dots b_n$ be strings of length $n$.
\begin{itemize}
\item[(i)] The \emph{Conway number}, or \emph{correlation}, of $A$ and $B$ 
is the nonnegative integer defined by
\begin{equation}
\label{eq:conway-number-def}
C(A,B) =\sum_{i=1}^n \delta_i 2^{n-i},
\end{equation}
where 
\begin{equation}
\label{eq:delta-i-def}
\delta_i=\begin{cases}
1 &\text{if $a_{i+j}=b_{1+j}$ for $j=0,\dots n-i$,}
\\
0 & \text{otherwise.}
\end{cases}
\end{equation}
In other words, $C(A,B)$ is the number with
binary expansion given by $\delta_1\delta_2\dots\delta_n$, 
where $\delta_i=1$ if the last $n-i+1$ bits of the string
$A$ coincide with the first $n-i+1$ bits of $B$, and $\delta_i=0$
otherwise.

\item[(ii)] 
The \emph{autocorrelation} of $A$ is defined as 
the correlation of $A$ with itself, i.e., as the Conway number $C(A,A)$.
\end{itemize}
\end{dfn}

Note that, by \eqref{eq:delta-i-def},
the leading bit,  $\delta_1$, in the Conway number $C(A,B)$
is equal to $1$ if and only if the two
strings $A$ and $B$ are equal. It follows that
the Conway number of two different
strings of length $n$ is at most $\sum_{i=2}^n
2^{n-i}=2^{n-1}-1$, while the Conway number of two identical strings of
length $n$ (i.e., the autocorrelation of this string) is at least
$2^{n-1}$ and at most $\sum_{i=1}^n 2^{n-i}=2^n-1$.

Conway numbers can be interpreted as either  binary
strings over $\{0,1\}$ (padded with leading $0$s if necessary so that the
string has length $n$), or as the integers represented by these strings.
In what follows we will use these two interpretations interchangeably.

We illustrate the calculation of Conway numbers with an example.

\begin{example}
Let $A=HHTHT$ and $B=HTHTT$. To calculate the bits $\delta_i$ of the
Conway number $C(A,B)$ first line up the two strings. If they are equal,
write a $1$ under the leading bits of the two strings; 
otherwise write a $0$:
\begin{flushleft}
\addtolength{\tabcolsep}{-5pt}
\hspace{.4\textwidth}
\begin{tabular}{c c c c c c|c}
H & H & T & H & T&& A \\
H & T & H & T & T && B\\
\hline
0 & & & && &\ C(A,B)\\
\end{tabular}
\end{flushleft}
Then repeatedly shift $A$ to the left, make the same comparison on the
overlapping parts of the two strings and write the result (i.e., 
$1$ if these parts match, and $0$ otherwise)
under the leading bits of the overlapping parts:
\begin{center}
\addtolength{\tabcolsep}{-5pt}
\begin{tabular}{r}
\begin{tabular}{c c c c c c c|c}
H & H & T & H & T & && A\\
& H & T & H & T & T && B\\
\hline
0 & 1 & & &&& &\ C(A,B) \\
\end{tabular} \\ \\

\begin{tabular}{c c c c c c c c|c}
H & H & T & H & T & & && A\\
& & H & T & H & T & T  && B\\
\hline
0 & 1 & 0 & &&&&&\ C(A,B) \\
\end{tabular} \\ \\

\begin{tabular}{c c c c c c c c c|c}
H & H & T & H & T & & & && A\\
& & & H & T & H & T & T && B\\
\hline
 0 & 1 & 0 & 1 & &&&&&\ C(A,B)\\
\end{tabular} \\ \\

\begin{tabular}{c c c c c c c c c c |c}
 H & H & T & H & T & & & & && A \\
 & & & & H & T & H & T & T  && B\\
\hline
 0 & 1 & 0 & 1 & 0 &&&&&&\ C(A,B) \\
\end{tabular}
\end{tabular}
\end{center}
At this point another shift would leave no overlap, and the algorithm
terminates. The binary string obtained in the last step is the binary
expansion of the Conway number of $A$ and $B$.
In the above example the final result is the binary string $01010$, 
so the Conway number $C(A,B)$ is $2^3+2^1=10$.
\end{example}

Using the concept of Conway numbers, Conway gave a remarkably simple
formula for computing the pairwise winning odds in the Penney-Ante
game. His result is as follows (see, e.g., Gardner \cite{gardner1974}). 

\begin{prop}
[Conway's Algorithm]
Let $A=a_1a_2...a_n$ and $B=b_1b_2...b_n$ be two distinct 
head/tail strings of length $n$.
Then the odds in favor of string $A$ over string $B$
in the Penney-Ante game are given by
\begin{equation}
\label{eq:conway-algorithm}
\frac{P(\text{$A$ appears before $B$})}
{P(\text{$B$ appears before  $A$})}=
\frac{C(B,B)-C(B,A)}{C(A,A)-C(A,B)}.
\end{equation}
\end{prop}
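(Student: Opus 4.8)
The plan is to prove Conway's formula by the classical martingale (``gambling team'') argument, following the idea of Li \cite{li1980}. For each of the two patterns $A$ and $B$ I would introduce an infinite team of gamblers who bet on that pattern appearing, and then exploit the fairness of the coin to obtain a martingale to which the optional stopping theorem can be applied. An alternative route via generating functions or a first-step Markov-chain analysis is possible, but the gambling argument is by far the most transparent way to see \emph{why} the Conway numbers control the odds.

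First I would set up the betting scheme. Flip a fair coin at times $t=1,2,\dots$. For the pattern $A$, a new gambler enters at each time $t$ and stakes \$1 that the flip at time $t$ equals $a_1$; a single correct guess pays fair ($2$-to-$1$) odds, so the gambler's \$1 becomes \$2, which is immediately wagered on the next flip equaling $a_2$, and so on. A gambler is eliminated (holdings \$0) as soon as one guess is wrong, and otherwise keeps doubling until the whole of $A$ has been matched. An identical team bets on $B$. Since every individual wager is fair, the collective net winnings of either team, defined as total current holdings minus total amount staked, is a martingale starting at $0$; because exactly one gambler enters per unit time, the amount a team has staked by time $t$ is exactly $t$.

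Next I would run the process until the stopping time $\tau$, the first time the last $n$ flips spell out either $A$ or $B$. I would verify that $\tau<\infty$ almost surely with finite mean and that the martingale has bounded increments (at most $n$ surviving gamblers per team, each holding at most $2^n$), so that optional stopping gives $E[\text{final holdings}]=E[\tau]$ for each team. The crux is the bookkeeping at time $\tau$. If $A$ appears first, then the surviving $A$-gamblers are exactly those whose length-$k$ suffix of $A$ coincides with the length-$k$ prefix of $A$, so the $A$-team holdings equal the autocorrelation $C(A,A)$ up to a common normalization factor; meanwhile a surviving $B$-gambler corresponds to a length-$k$ suffix of $A$ matching a length-$k$ prefix of $B$, so the $B$-team holdings equal the cross-correlation $C(A,B)$ up to the same factor. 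Symmetric statements when $B$ appears first produce $C(B,B)$ and $C(B,A)$. Matching the exponents $2^{n-i}$ of Definition \ref{def:conway-number} to the doubling of a surviving gambler's stake is the step that requires the most care, and it is where the overlap indicators $\delta_i$ get identified with ``still alive after $k$ correct guesses.''

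Finally I would assemble the pieces. Writing $p=P(A\text{ before }B)$ and conditioning on which pattern appears first, the two optional-stopping identities become two linear equations expressing $E[\tau]$ through $p$ and the four Conway numbers. Eliminating $E[\tau]$ and solving for $p/(1-p)$ yields precisely $\frac{C(B,B)-C(B,A)}{C(A,A)-C(A,B)}$, and the common normalization factor cancels in this ratio, so its exact value is irrelevant. I expect the main obstacle to be the combinatorial identification in the previous paragraph — confirming in both the self- and cross-correlation cases that a surviving gambler having made $k$ correct guesses matches exactly the condition ``suffix of length $k$ equals prefix of length $k$'' — together with the routine but necessary check that $\tau$ is integrable so that the use of optional stopping is legitimate.
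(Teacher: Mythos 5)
Your proposal is a correct outline of the standard martingale (``gambling team'') proof of Conway's formula, essentially the argument of Li \cite{li1980}. The paper itself does not prove this proposition at all --- it states it as a known result and refers the reader to Gardner \cite{gardner1974} --- so there is no in-paper argument to compare against; your write-up would in effect be supplying a proof the authors chose to omit. The key accounting in your sketch checks out against the paper's Definition \ref{def:conway-number}: when the game stops with $A$ having just appeared, the $B$-gambler who entered $k$ steps before the end survives exactly when the length-$k$ suffix of $A$ equals the length-$k$ prefix of $B$, and holds $2^k$; since the bit $\delta_i$ with $i=n-k+1$ contributes $2^{n-i}=2^{k-1}$ to $C(A,B)$, the team's holdings are exactly $2\,C(A,B)$, and the factor $2$ cancels in the final ratio just as you anticipate. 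Equating the two optional-stopping identities
\begin{equation*}
E[\tau]=2pC(A,A)+2(1-p)C(B,A)=2pC(A,B)+2(1-p)C(B,B),\qquad p=P(A\text{ before }B),
\end{equation*}
and solving for $p/(1-p)$ gives precisely \eqref{eq:conway-algorithm}. Two small points worth making explicit in a full write-up: first, before the stopping time no gambler can have completed a pattern, so at most $n$ gamblers per team are alive at any moment and the martingale increments are bounded by roughly $n2^{n}$, while $E[\tau]<\infty$ follows from the usual block argument (each disjoint block of $n$ tosses produces $A$ with probability at least $2^{-n}$); second, the denominator $C(A,A)-C(A,B)$ is strictly positive because $C(A,A)\ge 2^{n-1}$ while $C(A,B)\le 2^{n-1}-1$ for $B\ne A$, as noted after Definition \ref{def:conway-number}, so the odds ratio is well defined.
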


The following lemma establishes a connection between counts of strings with a
given autocorrelation and counts of \emph{pairs} of strings with a given
Conway number.

\begin{lem}
\label{lem:autocorr-sum}
Let $m$ and $k$ be positive integers with $0\le k\le 2^m-1$.
The number of pairs $(A_1,A_2)$
of strings of length $m$ with Conway number $k$ is
equal to the number of strings $A$ of length $2m$ whose autocorrelation is
congruent to $k$ mod $2^{m}$,  i.e., has a binary representation
that ends in the binary bits of $k$
(padded out to a string of length $m$ if necessary).

Moreover, if $X$ and $Y$ are strings of length less than $m$, the same
conclusion holds under the restrictions that  
  $X$ is a prefix of both $A$ and $A_1$ and 
        $Y$ is a suffix of both $A$ and $A_2$.
\end{lem}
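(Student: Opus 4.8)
The plan is to set up an explicit bijection between the two families being counted, powered by a single congruence: the low-order $m$ bits of the autocorrelation of a length-$2m$ string encode the Conway number of its two length-$m$ halves. Precisely, given length-$m$ strings $A_1,A_2$, I would form the length-$2m$ string $A=A_2A_1$ obtained by writing $A_2$ first and $A_1$ second, and prove
\[
C(A,A)\equiv C(A_1,A_2)\pmod{2^m}.
\]
Because the map $(A_1,A_2)\mapsto A_2A_1$ is a bijection from $\{H,T\}^m\times\{H,T\}^m$ onto $\{H,T\}^{2m}$, this congruence pairs off each residue class of autocorrelations mod $2^m$ with the set of pairs of the prescribed Conway number, giving at once the asserted equality of counts.

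The technical core is the congruence, which I would verify by matching the defining bits in Definition \ref{def:conway-number}. Writing $A=c_1\cdots c_{2m}$ and $C(A,A)=\sum_{i=1}^{2m}\epsilon_i2^{2m-i}$, reducing mod $2^m$ discards every bit of weight at least $2^m$ and keeps exactly $\epsilon_{m+1},\dots,\epsilon_{2m}$. For $i>m$ the overlap tested by $\epsilon_i$ has length $2m-i+1\le m$, so it compares a suffix of $A$ lying entirely in the second half with a prefix of $A$ lying entirely in the first half; since $A=A_2A_1$, the second half is $A_1$ and the first half is $A_2$, so $\epsilon_i=1$ exactly when the suffix of $A_1$ of length $2m-i+1$ equals the prefix of $A_2$ of the same length. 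Re-indexing by $\ell=i-m$ identifies this with the bit $\delta_\ell$ of $C(A_1,A_2)$, and the weights agree since $2^{2m-i}=2^{m-\ell}$; summing over $\ell=1,\dots,m$ gives the claimed identity.

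For the refinement, the key observation is that under this bijection $A_1$ and $A_2$ occur verbatim as the (second and first) halves of $A$. With the convention $A=A_2A_1$, any string of length less than $m$ is a prefix of $A$ if and only if it is a prefix of the first half $A_2$, and a suffix of $A$ if and only if it is a suffix of the second half $A_1$. Consequently the two stated restrictions---``$X$ is a prefix of both $A$ and $A_1$'' and ``$Y$ is a suffix of both $A$ and $A_2$''---translate into the pair-side conditions that $X$ be a common prefix of $A_1$ and $A_2$ and $Y$ a common suffix of both, and these correspond one-to-one across the bijection; hence the restricted counts agree as well. The step I expect to require the most care is fixing the concatenation order: the congruence naturally produces $C(A_1,A_2)$ only when $A_2$ is placed first, and one must check that this same order is what makes the prefix and suffix restrictions transfer cleanly, so that no reversal or relabeling of the pair is needed.
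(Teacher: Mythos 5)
Your verification of the central congruence is correct and is precisely the paper's key observation: splitting a length-$2m$ string into halves, the bits of weight less than $2^m$ in its autocorrelation record the comparisons of suffixes of the second half against prefixes of the first half, hence form the Conway number of (second half, first half). This settles the first, unrestricted assertion, where the orientation of the pair is immaterial since relabeling $(A_1,A_2)\mapsto(A_2,A_1)$ shows that the number of pairs with $C(A_1,A_2)=k$ equals the number with $C(A_2,A_1)=k$.

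The gap is in the ``Moreover'' part, and it stems from your choice of concatenation order. With $A=A_2A_1$ you do get $C(A,A)\equiv C(A_1,A_2)\pmod{2^m}$, but then ``$X$ is a prefix of $A$'' transfers to ``$X$ is a prefix of $A_2$'' and ``$Y$ is a suffix of $A$'' to ``$Y$ is a suffix of $A_1$'' --- the restrictions land on the \emph{opposite} components from those named in the lemma ($X$ a prefix of $A_1$, $Y$ a suffix of $A_2$). Your repair, reading the hypotheses as ``$X$ is a common prefix of $A_1$ and $A_2$ and $Y$ a common suffix of both,'' conflates a condition imposed on the string side with one imposed on the pair side; this strengthens both restrictions and so proves an equality of two \emph{different} counts, not the one asserted. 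The orientation issue is not cosmetic: for $m=2$, $X=Y=H$, $k=1$ there are $3$ admissible strings ($HHTH$, $HTHH$, $HTTH$) and $3$ admissible pairs when the pair's Conway number is read as $C(A_2,A_1)$ (namely $(HH,TH)$, $(HT,HH)$, $(HT,TH)$), but only $1$ pair (namely $(HT,TH)$) if it is read as $C(A_1,A_2)$ with $X$ still attached to $A_1$ and $Y$ to $A_2$. The paper avoids this by writing $A=A_1A_2$ with $A_1$ the \emph{first} half: then $X$-as-prefix and $Y$-as-suffix transfer verbatim to $A_1$ and $A_2$ with no strengthening, and the low-order bits of the autocorrelation compute $C(A_2,A_1)$, which is the orientation in which the lemma is meant and used. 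Reversing your concatenation order (and accepting that orientation of the Conway number) repairs the argument; as written, the restricted case is not established.
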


\begin{proof}
By letting $X$ and $Y$ be the empty strings, the first part of the
lemma is seen to be a special case of the second part, so it suffices 
to prove the latter part.

Let $X$ and $Y$ be strings of length less than $m$
and consider a string
$A$ of length $2m$ with $Y$ as a suffix and $X$ as a prefix. 
Write $A=A_1A_2$, where $A_1$ (resp. $A_2$)
is the string consisting of the first $m$ (resp. last $m$)
bits of $A$. Then $A_1$ has prefix $X$ and $A_2$ has suffix $Y$.
    
At the $m$th step of computing the autocorrelation of $A$, the  top copy
of $A$ will have been shifted by exactly $m$ bits to the left so that the
overlapping parts of the two copies of $A$ consist of the substrings
$A_2$ and $A_1$.  From then on, the calculation is the same as that of
the Conway number $C(A_2,A_1)$.  Therefore the last $m$ bits of the
autocorrelation of $A$ are the same as the $m$ bits of the 
Conway number $C(A_2,A_1)$.  It is easy to check 
that the mapping $A\to(A_1,A_2)$ defined in this way yields a bijection
between the following sets:
\begin{itemize}
\item[(I)] Strings $A$ of length $2m$ beginning with $X$
and ending in $Y$ whose autocorrelation ends in a given binary string of
length $m$. 
\item[(II)]
Pairs $(A_1,A_2)$ of strings of length $m$ such that $A_1$ begins with $X$, 
$A_2$ ends with $Y$, and the Conway number $C(A_2,A_1)$ is exactly equal to 
the given binary string. 
\end{itemize}
The claim now follows.
\end{proof}

The next lemma establishes some properties of autocorrelations that we
will need for the proof of Theorem \ref{thm:cn-recurrence}.

\begin{lem}
\label{lem:correctAutocorrelation}
Let $m\ge 2$.
\begin{itemize}
\item[(i)]
The only possible autocorrelation of length $2m$ 
whose last $m$ bits are $00\dots 01$ is the $(2m)$-bit string 
$100\dots 01$. 
In other words, if the 
autocorrelation of a string of length $2m$ is congruent to $1$ modulo
$2^m$, then it must be equal to  $2^{2m}+1$.

\item[(ii)]
The only possible autocorrelations of length $2m+1$ 
whose last $m$ bits are $00\dots 01$ 
are the $(2m+1)$-bit strings 
$100\dots 01$ 
and $1\underbrace{00\dots 0}_{m-1}1\underbrace{0\dots 0}_{m-1}1$.

\item[(iii)]
The only possible autocorrelations of length $2m+2$ 
whose last $m$ bits are $00\dots 01$ 
are the $(2m+2)$-bit strings 
$100\dots 01$ and
$1\underbrace{00\dots 0}_m1\underbrace{0\dots 0}_{m-1}1$.
\end{itemize}
\end{lem}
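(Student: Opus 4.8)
The plan is to recast the autocorrelation in the language of periods and then run essentially one counting argument across all three parts. Reading off Definition~\ref{def:conway-number}, the $i$th bit $\delta_i$ of the autocorrelation of a length-$n$ string $A=a_1\dots a_n$ equals $1$ exactly when $a_{k+(i-1)}=a_k$ for all $k=1,\dots,n-(i-1)$; that is, $\delta_i=1$ if and only if $A$ has period $p=i-1$, where period $0$ is trivial (so $\delta_1=1$ always) and period $n-1$ just means $a_1=a_n$. In this language the hypothesis that the last $m$ bits are $00\dots01$ becomes, uniformly in all three cases, the assertion that $A$ has the large period $p_0=n-1$ (this is the trailing $1$) but no period in the \emph{forbidden window} $I=\{n-m,\dots,n-2\}$ of length $m-1$ (these are the leading $m-1$ zeros). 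The conclusion then amounts to deciding which of the remaining periods can occur.

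The main tool is the elementary fact that if $p$ is a period of $A$ then so is every multiple $kp$ with $kp\le n-1$, by transitivity of $a_i=a_{i+p}=\dots=a_{i+kp}$. I would pair this with a short gap-counting estimate: for a period $p$, let $kp$ be its largest multiple not exceeding $n-2$ (the top of $I$). Then $(k+1)p>n-2$ forces $kp>n-2-p$, so as soon as $1\le p\le m-1=|I|$ the multiple $kp$ lands inside $I$, contradicting the absence of periods there. Hence every period $p$ with $1\le p\le m-1$ is impossible. Together with the given period $p_0=n-1$ and the forbidden window, this leaves the status of only the periods in the short window $\{m,\dots,n-m-1\}$ undetermined.

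This short window is exactly what distinguishes the three cases: it is empty for $n=2m$, equals $\{m\}$ for $n=2m+1$, and equals $\{m,m+1\}$ for $n=2m+2$. In part (i) there is nothing left to decide, so the autocorrelation is forced to be $10\dots01$. In part (ii) the lone candidate is the period $m$; if it is absent we get $10\dots01$, and if it is present it contributes the bit $\delta_{m+1}$ and nothing more (its only further multiple $2m=n-1$ is the already-present period $p_0$), giving the second pattern. In part (iii) there are two candidates: I would eliminate $p=m$ by observing that its multiple $2m$ lies in the window $I=\{m+2,\dots,2m\}$ and is therefore forbidden, while $p=m+1$ survives (its double $2(m+1)$ exceeds $n-1$, so it forces no additional bit), again yielding exactly the two listed strings. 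Since the trivial, gap-eliminated, short-window, forbidden, and large-period ranges partition $\{0,\dots,n-1\}$, the autocorrelation is completely pinned down in each case.

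The step I expect to be the crux is the borderline period at the lower edge of the window, where the clean bound $kp>n-2-p$ is too weak. This is precisely the value $p=m$ in part (iii): the generic estimate only yields $kp\ge m+1$, which sits just outside $I$, so one cannot conclude a contradiction abstractly and must instead inspect the actual largest multiple $2m$ and check that it lands in $I$. Verifying that these boundary divisibility facts go the right way, and that they rely on the standing assumption $m\ge 2$, is where the argument demands the most care; the remaining bookkeeping is routine.
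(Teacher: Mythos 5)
Your proposal is correct and follows essentially the same route as the paper's proof: your observation that a small period $p\le m-1$ forces a multiple $kp$ to land among the forbidden periods $\{n-m,\dots,n-2\}$ is exactly the paper's argument that $\delta_{q(k-1)+1}=1$ propagates to some $\delta_{n-j}=1$ with $1\le j\le k-1$, and your elimination of the period $m$ in part (iii) via its multiple $2m$ is the paper's step showing $\delta_{m+1}=1$ would force $\delta_{2m+1}=1$. The only difference is cosmetic: you phrase everything in the language of periods (the Guibas--Odlyzko viewpoint the paper itself alludes to) rather than directly in terms of the bits $\delta_i$.
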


\begin{proof}
Let $A=a_1a_2\dots a_n$ 
be a string of length $n=2m$ or $n=2m+1$ with autocorrelation ending in
the $m$ bit string $00\dots 01$. The conclusions of parts (i) and (ii)
will follow if we can show that the first $m$ bits of the autocorrelation
of $A$ are $100\dots 0$. 

Since the leading bit  $\delta_1$ of any
autocorrelation  must equal $1$, it suffices to show that the bits
$\delta_i$,  $i=2,3,\dots,m$, must all be $0$.  
We argue by contradiction. Suppose $\delta_k=1$ for some $k$ with $2\le
k\le m$.  Then $a_i=a_{i+k-1}$ for $1\le i\le n-k+1$. 
Iterating this identity yields $a_i=a_{i+q(k-1)}$ for any positive
integer $q$ satisfying $q<n/(k-1)$ and any $i$ with $1\le i\le n-q(k-1)$.  
But this implies $\delta_{q(k-1)+1}=1$ for any $q<n/(k-1)$.
It follows that 
among any $k-1$ consecutive indices $i$ there is at least one such that
$\delta_i=1$. Applying this observation to the set of indices $\{n-k+1,
n-k+2,\dots, n-1\}$ we conclude that $\delta_{n-j}=1$ for
some $j$ with $1\le j\le k-1$. Since $k\le m$, this contradicts the
assumption that the last $m$ bits of the autocorrelation of $A$
are $0\dots 01$. This completes the proof of parts (i) and (ii). 

For the proof of part (iii), assume $A$ is a string of length $n=2m+2$
with autocorrelation ending in the $m$ bits $00\dots 01$. The same
argument as for parts (i) and (ii) yields that the first $m$ bits of the
autocorrelation of $A$ are of the desired form, namely 
$100\dots 0$. Hence, the only bits of the autocorrelation other than the
first and last bit that can possibly be equal to $1$ are $\delta_{m+1}$
and $\delta_{m+2}$. To obtain the desired conclusion we must rule out
the case $\delta_{m+1}=1$.   

Suppose $\delta_{m+1}=1$. Then the above argument yields
$\delta_{1+qm}=1$ for any $q<n/m$.  In particular, it follows that
$\delta_{2m+1}=1$. But then the autocorrelation of $A$ ends in the two bits 
$11$, contradicting the assumptions of the lemma.
This completes the proof.
\end{proof}

We remark that the reasoning employed in this proof can be viewed as a
special case of the \emph{forward propagation rule} of Guibas and Odlyzko
\cite[Theorem 5.1]{guibas-odlyzko1981}.

\section{Proof of Theorem \protect\ref{thm:cn-recurrence}}
\label{sec:theorem1-proof}

Theorem \ref{thm:cn-recurrence} asserts 
that the number $c_n$ of optimal strategies
for Player I satisfies the recurrence
\eqref{eq:cn-recurrence}, i.e., 
\begin{equation}
\label{eq:cn-recurrence-DUP}
	c_n=2c_{n-1}-(-1)^nc_{\fl{n/2}+1}\quad (n\ge 6).
\end{equation}

Our argument is based on Csirik's characterization of optimal strategies 
for Player I, which we state in the following proposition. 

\begin{prop}
	[{Csirik \cite[Corollary 4]{csirik1992}}]
		\label{prop:csirik-theorem3}
Let $n\ge 5$. The optimal strategies for Player I in the Penney-Ante game
with strings of length $n$ are exactly
the strings of the form $A=HTa_3\dots a_{n-3}THH$  or
$A=THa_3\dots a_{n-3}HTT$
such that the  
$(n-1)$-bit prefix of $A$ has autocorrelation $2^{n-2}+1$.
Under these strategies, the probability that Player I wins the game
assuming optimal play by Player II is given by 
\begin{equation}
\label{eq:csirik-winning-probability}
P(\text{Player I wins})
=\frac{2^{n-2}+1}{3\cdot 2^{n-2}+2}.
\end{equation}
\end{prop}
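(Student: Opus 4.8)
The plan is to derive the characterization directly from Conway's algorithm together with the Guibas--Odlyzko reduction of Player II's response to the two candidates $HA'$ and $TA'$, where $A'=a_1\cdots a_{n-1}$ is the $(n-1)$-bit prefix of $A=a_1\cdots a_n$. For a fixed response $B\in\{HA',TA'\}$, Conway's algorithm gives the odds in favor of $A$ as $N/D$ with $N=C(B,B)-C(B,A)$ and $D=C(A,A)-C(A,B)$, so that Player~I wins with probability $N/(N+D)$. Player~II then selects the initial bit minimizing this quantity (by the Csirik--Felix tie-breaking result exactly one choice is strictly better), and Player~I seeks the string $A$ maximizing the resulting minimum. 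Thus the proof reduces to (i) computing the four Conway numbers as explicit functions of the correlation structure of $A$, and (ii) solving the ensuing minimax optimization.

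The computation in (i) is what makes the problem tractable. The first fact I would record is that $C(b_1A',A)=C(A',A')$ independently of the prepended bit $b_1$: in the sliding-alignment computation of $C(b_1A',A)$ the bit $b_1$ participates only at the zeroth shift, where the two (distinct) strings disagree, while every later shift reproduces exactly the comparison defining the autocorrelation of $A'$. Hence $N=C(B,B)-C(A',A')$ depends on $b_1$ only through $C(B,B)$. A parallel bit-by-bit analysis expresses the remaining numbers in terms of the autocorrelation bits of $A$ and $A'$, masked by the indicators $[a_i=b_1]$; writing $\delta_i^{A}$ for the $i$th bit of the autocorrelation $C(A,A)$, the interior autocorrelation bits of $B=b_1A'$ take the form $[a_{k-1}=b_1]\,\delta_k^{A'}$, and the nonleading bits of $C(A,b_1A')$ take the form $[a_i=b_1]\,\delta_{i+1}^{A}$. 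Substituting these into $N/(N+D)$ yields Player~I's winning probability against the optimal response as an explicit function of the self-overlap pattern of $A$ together with its boundary letters.

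The heart of the argument, and the step I expect to be the main obstacle, is the optimization. Because every Conway number is dominated by its leading bit $2^{n-1}$ (respectively $2^{n-2}$ for the prefix), the winning probability tends to $1/3$, and extracting the precise maximizer requires controlling the next-order overlap bits. The plan is to show that any short-period self-overlap in $A$ degrades Player~I's minimax value, forcing all interior autocorrelation bits to vanish; this pins the first two letters to $HT$ (or $TH$) and the last three to $THH$ (or $HTT$), and makes the prefix autocorrelation equal to its smallest value consistent with these boundary letters, namely $2^{n-2}+1$ (the leading bit together with the forced end-overlap $a_{n-1}=a_1$). Lemma~\ref{lem:correctAutocorrelation}, which classifies the admissible autocorrelations ending in $0\cdots01$, is exactly the tool needed to exclude competing near-minimal overlap patterns and to confirm that $2^{n-2}+1$ is genuinely attained by a rich family of interior strings. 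The delicate point is verifying that Player~II's freedom to choose $b_1$ cannot be exploited against these strings, i.e.\ that \emph{both} candidate responses leave Player~I with at least the target probability.

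Finally, I would substitute the resulting Conway numbers back into Conway's algorithm for a string $A$ of the stated form whose prefix has autocorrelation $2^{n-2}+1$. The clean values $C(b_1A',A)=C(A',A')=2^{n-2}+1$, together with the corresponding evaluations of $C(A,A)$, $C(A,B)$, and $C(B,B)$, give odds $(2^{n-2}+1):(2^{n-1}+1)$ in favor of $A$ and hence the winning probability $\tfrac{2^{n-2}+1}{3\cdot 2^{n-2}+2}$ of \eqref{eq:csirik-winning-probability}. Since this value is independent of both the choice of boundary orientation and the admissible interior bits, all such strings tie as optimal strategies for Player~I, which completes the characterization.
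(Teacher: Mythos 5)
Your proposal attempts to prove a statement the paper itself does not prove: Proposition~\ref{prop:csirik-theorem3} is imported verbatim as Csirik's Corollary~4 and used as a black box (its only role in the paper is to feed Corollary~\ref{prop:csirik-corollary}), so your attempt must stand on its own against Csirik's original argument. Your computational frame is correct as far as it goes. The reduction to the two responses $B\in\{HA',TA'\}$ via Guibas--Odlyzko and the Csirik--Felix tie-breaking result is legitimate background; the identity $C(b_1A',A)=C(A',A')$ holds whenever $B\neq A$ (the shifted comparisons defining $C(B,A)$ reproduce, with equal binary weights, exactly the autocorrelation comparisons of $A'$); and your masked-bit formulas for $C(B,B)$ and $C(A,B)$ check out against Definition~\ref{def:conway-number}. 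The endgame arithmetic is also consistent: for $n=5$ and $A=HTTHH$ (prefix autocorrelation $1001_2=2^3+1$), the response $B=THTTH$ gives odds $(18-9)/(17-0)=9/17=(2^{n-2}+1):(2^{n-1}+1)$ and probability $9/26$, matching \eqref{eq:csirik-winning-probability}.

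The genuine gap is that the entire optimization --- the actual content of the theorem --- is only a plan, and the plan's key heuristic fails as stated. Writing $\delta'_k$ for the autocorrelation bits of $A'$ and using your own formulas, one gets, with $[\,\cdot\,]$ the indicator,
\begin{equation*}
N=C(B,B)-C(B,A)=2^{n-2}+[a_{n-1}=b_1]+\sum_{k=2}^{n-1}\delta'_k\,\varepsilon_k\,2^{n-1-k},
\qquad
\varepsilon_k=
\begin{cases}
+1 & \text{if } a_{k-1}=b_1,\\
-1 & \text{otherwise.}
\end{cases}
\end{equation*}
Each prefix self-overlap thus contributes to Player~I's payoff with a \emph{sign controlled by Player~II's choice of $b_1$}, while $D=C(A,A)-C(A,B)$ varies simultaneously through the same overlap structure; so ``any short-period self-overlap degrades the minimax value'' is not a monotonicity statement you can invoke --- a given overlap helps against one response and hurts against the other, and resolving this interaction is precisely the case analysis that fills Csirik's paper. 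You also never execute the step you yourself flag as delicate: that for the claimed strings the minimum over $b_1\in\{H,T\}$ equals the target value (the two responses genuinely differ --- $9/26$ versus $4/11$ in the $n=5$ example --- so this requires computing $C(B,B)$ and $C(A,B)$ for both candidates in general), and you give no argument at all for the exactness direction, i.e., that every string outside the stated family does \emph{strictly} worse. Finally, Lemma~\ref{lem:correctAutocorrelation} is not the missing tool: it classifies autocorrelations ending in $0\cdots01$ and serves the counting recurrence of Theorem~\ref{thm:cn-recurrence}; it compares no payoffs between overlap patterns. As it stands, your proposal is a correct setup plus a verification that the stated strings achieve the stated value against one response, with the characterization itself unproved.
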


\begin{cor}
\label{prop:csirik-corollary}
Let $n\ge 5$.  The number $c_n$ of optimal strategies for Player I
in the Penney-Ante game with strings of length $n$ is given by 
$c_n=2c_{n-1}^*$, where $c_m^*$ denotes the number of strings of length
$m$ beginning with $HT$ and ending in $TH$
that have autocorrelation $2^{m-1}+1$. 
\end{cor}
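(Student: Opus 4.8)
The plan is to read the optimal strings straight off Csirik's characterization (Proposition~\ref{prop:csirik-theorem3}) and to sort them into the two families appearing there: the ``first family'' of strings $A = HTa_3\dots a_{n-3}THH$ and the ``second family'' of strings $A = THa_3\dots a_{n-3}HTT$. I would begin by noting that these two families are disjoint, since every string in the first begins with $HT$ and every string in the second begins with $TH$. Consequently $c_n$ splits as the sum of the two family sizes, and it suffices to show that each family contains exactly $c_{n-1}^*$ strings.

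For the first family I would exhibit a bijection between such optimal strings and the strings counted by $c_{n-1}^*$. Given $A = HTa_3\dots a_{n-3}THH$, its length-$(n-1)$ prefix is $A' = HTa_3\dots a_{n-3}TH$, which has length $m = n-1$, begins with $HT$, ends in $TH$, and---by the condition in Csirik's proposition---has autocorrelation $2^{m-1}+1$; these are exactly the strings enumerated by $c_{n-1}^*$. The map $A \mapsto A'$ is clearly injective, with inverse $A' \mapsto A'H$: appending a single $H$ to any string $A'$ counted by $c_{n-1}^*$ yields a string of the required shape $HT\dots THH$ whose $(n-1)$-bit prefix is $A'$ and hence still has autocorrelation $2^{m-1}+1$, so $A'H$ is again optimal. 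Thus the first family has precisely $c_{n-1}^*$ elements.

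For the second family I would use the global bit-swap $H \leftrightarrow T$ purely as a counting device. This involution carries a first-family string $HTa_3\dots a_{n-3}THH$ to the second-family string $TH\bar a_3\dots\bar a_{n-3}HTT$, and it preserves autocorrelation, since autocorrelation records only which shifted copies of a string agree, not the actual symbols. It therefore matches the defining conditions of the two families bit for bit and restricts to a bijection between them, so the second family also has $c_{n-1}^*$ elements. Adding the two counts and invoking disjointness gives $c_n = 2c_{n-1}^*$.

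The combinatorics here is light once Proposition~\ref{prop:csirik-theorem3} is granted, so the only point demanding genuine care is the surjectivity half of the first-family bijection: one must confirm that appending $H$ to a string counted by $c_{n-1}^*$ can never violate a constraint of Csirik's characterization. Since that characterization imposes its sole nontrivial requirement on the $(n-1)$-bit prefix, this reduces to checking that the appended letter produces exactly the suffix $THH$, which is immediate from the form of the strings; the symmetric check handles the second family. I would also remark that the whole argument is uniform in $n \ge 5$, the empty middle block $a_3\dots a_{n-3}$ occurring when $n = 5$ causing no trouble.
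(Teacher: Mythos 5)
Your proposal is correct and follows essentially the same route as the paper: the paper likewise invokes the $H\leftrightarrow T$ symmetry to equate the sizes of the two families and then identifies the first family with the strings counted by $c_{n-1}^*$ via the length-$(n-1)$ prefix. You simply spell out the details (disjointness, autocorrelation invariance under the bit swap, and the append-$H$ inverse) that the paper leaves implicit.
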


\begin{proof}
By symmetry there are an equal number of optimal strings of each of the
two forms described in Proposition \ref{prop:csirik-theorem3}.
Therefore the number $c_n$ of optimal strings is twice
the number of such strings of the first form, i.e., 
$HTa_3\dots a_{n-3}THH$, and 
those strings are in one-to-one correspondence with the strings
of length $n-1$ counted by $c_{n-1}^*$. Hence $c_n=2c_{n-1}^*$. 
\end{proof}

In light of Corollary \ref{prop:csirik-corollary},
the desired recurrence
\eqref{eq:cn-recurrence-DUP} for $c_n$ can be restated as a recurrence for
the numbers $c_n^*$:
\begin{equation}
    \label{eq:cnstar-recurrence}
        c_n^*=2c_{n-1}^*+(-1)^nc_{\fl{(n+1)/2}}^*\quad (n\ge 5).
\end{equation}
Considering separately the case of even and odd values of $n$, we can
rewrite \eqref{eq:cnstar-recurrence} as the pair of recurrences
\begin{align}
    \label{eq:cnstar-recurrence-odd}
    c_{2m+1}^*&=2c_{2m}^*-c_{m+1}^*\quad (m\ge 2),
    \\
    \label{eq:cnstar-recurrence-even}
    c_{2m}^*&=2c_{2m-1}^*+ c_{m}^*\quad (m\ge 3).
\end{align}
To prove Theorem \ref{thm:cn-recurrence}, it suffices to establish 
the relations \eqref{eq:cnstar-recurrence-odd} and
\eqref{eq:cnstar-recurrence-even}.

\begin{proof}[Proof of \eqref{eq:cnstar-recurrence-odd}]
We will prove \eqref{eq:cnstar-recurrence-odd} by showing that, for
$m\ge2$,
\begin{equation}
\label{eq:cnstar-recurrence-odd2}
2c_{2m}^*=c_{2m+1}^*+c_{m+1}^*.
\end{equation}

Consider a string $A$ counted by $c_{2m}^*$, i.e., 
a string of length $2m$ of the form
\begin{equation}
\label{eq:A-string}
A=HTa_3\dots a_{2m-2}TH 
\end{equation}
with autocorrelation $100\dots 01$.
Write $A=A_1A_2$, where $A_1$ is the string consisting of the first $m$
bits of $A$, and $A_2$ is the string consisting of the second $m$ bits of
$A$, i.e., 
\begin{equation}
\label{eq:A2-A1-string}
A_1=HTa_3\dots a_{m},\quad
A_2=a_{m+1}\dots a_{2m-2}TH.
\end{equation}
Given $X\in\{H,T\}$, define a string $A^X$  of length $2m+1$ by
\begin{align}
\label{eq:AX-string}
A^X=A_1XA_2=HTa_3\dots a_{m}Xa_{m+1}\dots a_{2m-2}TH.
\end{align}

Now note that when calculating the autocorrelation of each of the 
three strings
$A$ and $A^X$, $X\in\{H,T\}$, 
the last $m$ bits are based on comparing $A_2$ with
$A_1$ and thus are the same for each of these three strings (cf. the
proof of Lemma \ref{lem:autocorr-sum}).
Since, by assumption, the string $A$ has autocorrelation
$100\dots 01$, and hence ends in the $m$-bit string $00\dots 01$, 
the autocorrelations of the strings $A^X$ must   
end in the same $m$-bit string  $00\dots 01$. By Lemma
\ref{lem:correctAutocorrelation}(ii) this is only possible if $A^X$
has an autocorrelation of one of the following two forms:
\begin{equation}
\label{eq:AX-autocorrelation}
\mbox{(I)}\quad 1\underbrace{00\dots 0}_{2m-1}1\quad\text{ or }\quad 
\mbox{(II)}\quad  1\underbrace{00\dots 0}_{m-1}1\underbrace{0\dots 0}_{m-1}1.
\end{equation}
Conversely, any string $A^X$ of the form \eqref{eq:AX-string}
with autocorrelation
\eqref{eq:AX-autocorrelation} corresponds to a string $A$
of the form \eqref{eq:A-string} with autocorrelation  ending in the
$m$-bit string $00\dots01$. By Lemma 
\ref{lem:correctAutocorrelation}(i) each such string $A$ has
autocorrelation $100\dots 01$ and thus is counted by $c_{2m}^*$.
Since each string $A$ counted by $c_{2m}^*$
gives rise to two strings $A^X$ 
with autocorrelation \eqref{eq:AX-autocorrelation}
(one for each choice of $X$), the total number of strings 
$A^X$ with autocorrelation \eqref{eq:AX-autocorrelation} must be 
$2c_{2m}^*$.

On the other hand, we can also count the number of such strings $A^X$  
by counting separately those whose autocorrelation is given by (I) in
\eqref{eq:AX-autocorrelation}
and those whose autocorrelation is given by
(II)  in \eqref{eq:AX-autocorrelation}.
The strings $A^X$ with autocorrelation (I) are exactly those
counted by $c_{2m+1}^*$, so the number of such strings is $c_{2m+1}^*$. 

The strings $A^X$ with autocorrelation (II)  
can be counted as follows: Observe that the last
$m+1$ bits of the autocorrelation of $A=A_1XA_2$ are based on the
comparison of the strings $XA_2=Xa_{m+1}\dots a_{2m-2}TH$
and $A_1X=HTa_3\dots a_mX$, and thus can only be of the
form $100\dots 01$ if these two strings are equal to a common string 
$B=HTb_3\dots b_{m-1}TH$ of length $m+1$ with autocorrelation $100\dots
01$, i.e., a string counted by $c_{m+1}^*$. 
Conversely, any such string $B$ corresponds to a string $A^X$
with autocorrelation (II).  Thus, the number of strings $A^X$
with autocorrelation (II) is exactly $c_{m+1}^*$.

It follows that $2c_{2m}^*=c_{2m+1}^*+c_{m+1}^*$, which proves the desired
relation \eqref{eq:cnstar-recurrence-odd2}.
\end{proof}

\begin{proof}[Proof of \eqref{eq:cnstar-recurrence-even}]
We will show that
\begin{equation}
\label{eq:cnstar-recurrence-even2}
4c_{2m}^*=c_{2m+2}^*+c_{m+1}^*.
\end{equation}
Substituting the relation \eqref{eq:cnstar-recurrence-odd2}
into \eqref{eq:cnstar-recurrence-even2},
we obtain $2c_{2m+1}^*=c_{2m+2}^* -c_{m+1}^*$, which yields 
the desired relation \eqref{eq:cnstar-recurrence-even} after shifting 
the index.

To prove \eqref{eq:cnstar-recurrence-even2}, we begin as before by
letting $A$ be a string counted by $c_{2m}^*$, i.e.,  
a string of length $2m$ of the form \eqref{eq:A-string},
with autocorrelation $100\dots 01$. We define 
$A_1$ and $A_2$ by \eqref{eq:A2-A1-string},
and consider the four strings $A^{XY}$ of length $2m+2$ obtained by inserting 
a two-bit string $XY$ (with $X,Y\in \{H,T\}$) between $A_1$ and $A_2$; that is, 
\begin{equation}
\label{eq:AXY-string}
A^{XY}=A_1XYA_2.
\end{equation}
Arguing as before, we see that the last $m$ bits of the autocorrelation of each
such string $A^{XY}$ are equal to the last $m$ bits of the autocorrelation
of the string $A$ and hence must be $00\dots 01$.  By Lemma
\ref{lem:correctAutocorrelation}(iii) it follows that $A^{XY}$ must have  
autocorrelation of the form    
\begin{equation}
\label{eq:AXY-autocorrelations}
\mbox{(I)'}\quad 1\underbrace{00\dots 0}_{2m}1\quad\text{ or }\quad 
\mbox{(II)'}\quad 1\underbrace{00\dots 0}_{m}1\underbrace{0\dots 0}_{m-1}1.
\end{equation}
The number of strings $A^{XY}$ with autocorrelation (I)' is
exactly $c_{2m+2}^*$. As before, we see that the case  of 
autocorrelation (II)' occurs if and only if the strings $XA_1$ and $A_2Y$
are equal to a common string of length $m+1$ of the form $B=HTb_3\dots
b_{m-1}TH$ with autocorrelation $100\dots 01$.
Since there are exactly $c_{m+1}^*$ such strings $B$, 
the number of strings $A^{XY}$ with autocorrelation (II)' is also $c_{m+1}^*$.

Since there are $c_{2m}^*$ strings $A$, and each of these strings
corresponds to exactly four strings $A^{XY}$, we obtain 
$4c_{2m}^*=c_{2m+2}^*+ c_{m+1}^*$. This is the desired relation
\eqref{eq:cnstar-recurrence-even2}. 
\end{proof}

\section{Proof of Theorem \protect\ref{thm:cn-asymptotic}}
\label{sec:theorem2-proof}

Theorem \ref{thm:cn-asymptotic} states that the number $c_n$ satisfies 
the asymptotic relation \eqref{eq:cn-asymptotic}, i.e., 
\begin{equation}
\label{eq:cn-asymptotic-DUP}
                c_n=
                \begin{cases}
                \alpha 2^n + O\left(2^{n/4}\right) 
                & \text{if $n$ is even,}
                \\
                \alpha \left(2^n + 2^{\fl{n/2}+1}\right)+ O(2^{n/4})
                & \text{if $n$ is odd,}
\end{cases}
        \end{equation}
where $\alpha=0.040602\dots$ is a numerical constant.

To prove \eqref{eq:cn-asymptotic-DUP}, we will employ an
iterative procedure based on the recurrence  \eqref{eq:cn-recurrence} 
of Theorem \ref{thm:cn-recurrence}.

We first rewrite  \eqref{eq:cn-recurrence} as the pair of recurrences 
\begin{align}
\label{eq:cn-recurrence-odd}
c_{2m+1}&=2c_{2m}+c_{m+1}\quad (m\ge 3),
\\
\label{eq:cn-recurrence-even}
c_{2m}&=2c_{2m-1}-c_{m+1}\quad (m\ge 3).
\end{align}
Iterating \eqref{eq:cn-recurrence-odd}
and \eqref{eq:cn-recurrence-even} once yields
\begin{align}
\label{eq:cn-recurrence-odd2}
c_{2m+1}&=
2(2c_{2m-1}-c_{m+1})+c_{m+1}=
4c_{2m-1}-c_{m+1}\quad (m\ge 4),
\\
\label{eq:cn-recurrence-even2}
c_{2m}&=
 2(2c_{2m-2}+c_m)-c_{m+1}=
4c_{2m-2}+2c_m-c_{m+1}\quad (m\ge 4).
\end{align}

To bootstrap our iterative argument, we need a relatively crude initial
bound for $c_n$.  The following lemma provides such a bound.
\begin{lem}
\label{lem:cn-initial-bounds}
We have
\begin{equation}
\label{eq:cn-initial-bounds}
2^{n-6}\le c_n\le 2^{n-4}\quad (n\ge 5).
\end{equation}
\end{lem}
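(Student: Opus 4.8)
The plan is to reduce the claimed bounds on $c_n$ to corresponding bounds on the auxiliary quantity $c_m^*$ introduced in Corollary \ref{prop:csirik-corollary}. Since $c_n=2c_{n-1}^*$ for $n\ge 5$, the inequalities \eqref{eq:cn-initial-bounds} are, after setting $m=n-1$, equivalent to
\[
2^{m-6}\le c_m^*\le 2^{m-4}\qquad (m\ge 4).
\]
Recall that $c_m^*$ counts the strings $A=a_1\dots a_m$ that begin with $HT$, end with $TH$, and have autocorrelation $2^{m-1}+1$. There are exactly $2^{m-4}$ strings satisfying the prefix/suffix requirement (the four boundary bits are prescribed and the remaining $m-4$ bits are free), so the upper bound $c_m^*\le 2^{m-4}$ is immediate. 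The entire difficulty lies in the matching lower bound.

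For the lower bound I would argue by complementary counting. Among the $2^{m-4}$ candidate strings, those \emph{not} counted by $c_m^*$ are exactly the ones whose autocorrelation differs from $2^{m-1}+1$. Since $a_1=a_m=H$ forces the last bit $\delta_m$ of the autocorrelation to be $1$, and the leading bit $\delta_1$ is always $1$, Definition \ref{def:conway-number} shows that this happens precisely when the length-$\ell$ prefix and length-$\ell$ suffix of $A$ agree for some $\ell$ with $2\le \ell\le m-1$ (a nontrivial ``border''). The plan is then to bound, for each such $\ell$, the number $N_\ell$ of candidate strings possessing a border of length $\ell$, and to conclude via the union bound that the number of excluded strings is at most $\sum_{\ell=2}^{m-1}N_\ell$. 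It suffices to show this sum is at most $\tfrac34\cdot 2^{m-4}=3\cdot 2^{m-6}$.

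The key estimates are the bounds for the $N_\ell$, and here the prescribed boundary bits do the work. First, $N_2=0$, since a border of length $2$ would force the prefix $HT$ to equal the suffix $TH$. For $3\le\ell\le m/2$ the border is non-overlapping, so a string with such a border is determined by its first $m-\ell$ bits; transporting the four constraints $a_1=H$, $a_2=T$, $a_{m-1}=T$, $a_m=H$ through the border equalities pins down four of these free bits when $\ell\ge 4$ and three when $\ell=3$, giving $N_\ell\le 2^{m-\ell-4}$ for $\ell\ge 4$ and $N_3\le 2^{m-6}$. For $m/2<\ell\le m-1$ the string is periodic with period $p=m-\ell<m/2$, hence determined by its first $p$ bits, and the constraint $a_1=H,\,a_2=T$ gives $N_\ell\le 2^{p-2}=2^{m-\ell-2}$. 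Summing the resulting geometric series yields
\[
\sum_{\ell=2}^{m-1}N_\ell\le 2^{m-6}+\sum_{\ell=4}^{\infty}2^{m-\ell-4}+\sum_{1\le p<m/2}2^{p-2}\le \tfrac32\cdot 2^{m-6}+2^{(m-3)/2},
\]
which lies below $3\cdot 2^{m-6}$ once $m\ge 8$; the finitely many cases $4\le m\le 7$ are then checked directly against the tabulated values in Table \ref{table:cn-initial-values} (noting that the lower bound is trivial whenever $2^{m-6}\le 1$).

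I expect the main obstacle to be making the per-length bounds on $N_\ell$ simultaneously sharp enough. The naive unconstrained count of strings with a border of length $\ell$ is $2^{m-\ell}$, whose sum over $\ell$ is of size $2^{m-1}$ and hence far too weak to beat the target $\tfrac34\cdot2^{m-4}$. The argument therefore genuinely depends on extracting the four-bit savings from the prescribed prefix $HT$ and suffix $TH$ for the short, non-overlapping borders, while separately confirming that the overlapping (periodic) borders with $\ell>m/2$ contribute only an exponentially negligible $O\!\left(2^{m/2}\right)$ amount.
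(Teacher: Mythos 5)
Your proof is correct, but it takes a genuinely different route from the paper's. The paper never returns to the combinatorial description of the optimal strings here: it works entirely with the recurrence of Theorem \ref{thm:cn-recurrence}, in the twice-iterated forms $c_{2m+1}=4c_{2m-1}-c_{m+1}$ and $c_{2m}=4c_{2m-2}+2c_m-c_{m+1}$, getting the upper bound by discarding the negative term and iterating down to $c_7=6$ and $c_8=10$, and the lower bound by feeding that upper bound back into the recurrence and summing the resulting geometric series. You instead go back to Corollary \ref{prop:csirik-corollary} and bound $c_m^*$ directly: the upper bound is simply the count $2^{m-4}$ of strings with the four prescribed boundary bits, and the lower bound comes from a union bound over nontrivial borders, with the essential observation that the prefix $HT$ and suffix $TH$ propagate through a non-overlapping border to pin down four (three when $\ell=3$) of the free bits, so that the total excluded count is only about $\tfrac{3}{8}\cdot 2^{m-4}$ plus an $O(2^{m/2})$ contribution from the periodic (overlapping-border) case. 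I checked the details — the direction of the union bound, the $\ell=2$ and $\ell=3$ degeneracies, the periodic case $p=m-\ell$, and the threshold $m\ge 8$ with the cases $4\le m\le 7$ read off from Table \ref{table:cn-initial-values} — and they all go through. The paper's argument is shorter given that Theorem \ref{thm:cn-recurrence} is already in hand and it sets up the iteration scheme reused in Lemmas \ref{lem:dn-convergence1} and \ref{lem:dn-asymptotic2}; your argument is independent of the recurrence, is more structural (it explains that \emph{most} strings with the right boundary bits have the extremal autocorrelation), and in fact yields the slightly stronger lower bound $c_m^*\ge\tfrac58\cdot 2^{m-4}+O(2^{m/2})$, consistent with $\alpha\approx 0.0406$ lying near the lower end of the interval $[2^{-6},2^{-4}]$ for $\lim c_n/2^n$.
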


\begin{proof}
For $5\le n\le 8$ the bounds \eqref{eq:cn-initial-bounds} can be
verified directly using Table  \ref{table:cn-initial-values}. Thus it
suffices to prove these bounds for $n\ge9$.

For the upper bound in \eqref{eq:cn-initial-bounds}, note that
\eqref{eq:cn-recurrence-odd2} implies $c_{2m+1}\le 4c_{2m-1}$ for all
$m\ge4$. Iterating this inequality $m-3$ times yields
\[
c_{2m+1}\le
4^{m-3}c_7=2^{2m-6}\cdot 6<2^{2m+1-4} \quad (m\ge 4),
\]
which is the desired upper bound for odd values $n\ge 9$. The bound for even
values $n$ then follows on noting that, by \eqref{eq:cn-recurrence-even},
$c_{2m}\le 2 c_{2m-1}\le 2\cdot 2^{2m-1-4}= 2^{2m-4}$ for $m\ge3$.

We now turn to the lower bound in \eqref{eq:cn-initial-bounds}. Using
\eqref{eq:cn-recurrence-odd2} along with the upper bound $c_{m+1}\le
2^{m+1-4}$  we obtain
\begin{equation*}
c_{2m+1}=4c_{2m-1}-c_{m+1}\ge 4c_{2m-1}-2^{m-3}\quad (m\ge 4).
\end{equation*}
Iterating this inequality $m-3$ times gives
\begin{align*}
c_{2m+1}&\ge 4^{m-3}c_{7}-S=2^{2m-6}\cdot 6 -S,
\end{align*}
where
\begin{align*}
S&=\sum_{i=0}^{m-4} 2^{m-3-i}4^{i}
=2^{m-3}\sum_{i=0}^{m-4} 2^i=  2^{m-3}(2^{m-3}-1)< 2^{2m-6}.
\end{align*}
Hence
\begin{equation}
\label{eq:cn-lower-bound-odd}
c_{2m+1}\ge 2^{2m-6}\cdot 6-2^{2m-6}> 2^{2m+1-6} \quad (m\ge 4).
\end{equation}
An analogous argument, based on
\eqref{eq:cn-recurrence-even2}, yields
\begin{align}
\label{eq:cn-lower-bound-even}
c_{2m}&\ge 4c_{2m-2} -c_{m+1}
\\
\notag
&\ge 4c_{2m-2}-2^{m-3}
\\
\notag
&\ge 4^{m-4}c_8-2^{2m-6}\\
\notag
&= 2^{2m-8}\cdot 10 - 2^{2m-6} > 2^{2m-6}
\quad (m\ge 4).
\end{align}
The desired lower bound, $c_n\ge 2^{n-6}$, follows (for $n\ge9$) from
\eqref{eq:cn-lower-bound-odd} and \eqref{eq:cn-lower-bound-even}.
This completes the proof of Lemma \ref{lem:cn-initial-bounds}.
\end{proof}

Next, we rescale $c_n$ by setting
\begin{equation}
\label{eq:dn-definition}
d_n=2^{-n}c_n.
\end{equation}
The inequalities \eqref{eq:cn-initial-bounds}
of Lemma \ref{lem:cn-initial-bounds} imply
\begin{equation}
\label{eq:dn-initial-bounds}
\frac{1}{64}\le d_n\le \frac{1}{16}\quad (n\ge 5),
\end{equation}
so the sequence $\{d_n\}$ is bounded above and below by positive
constants.
In the following lemma, we show that this sequence converges.

\begin{lem}
\label{lem:dn-convergence1}
The limit
\begin{equation}
\label{eq:dn-limit}
\alpha=\lim_{n\to\infty}d_n
\end{equation}
exists and is strictly positive. Moreover, as $n\to\infty$, we have
\begin{equation}
\label{eq:dn-asymptotic1}
d_n=\alpha + O\left(2^{-n/2}\right).
\end{equation}
\end{lem}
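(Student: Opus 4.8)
The plan is to translate the recurrence of Theorem~\ref{thm:cn-recurrence} into a statement about the rescaled quantities $d_n=2^{-n}c_n$ and then recognize the resulting relation as a telescoping sum with rapidly decaying increments. Dividing the recurrence $c_n=2c_{n-1}-(-1)^nc_{\fl{n/2}+1}$, valid for $n\ge 6$, by $2^n$ gives
\begin{equation*}
d_n=d_{n-1}-(-1)^n 2^{\fl{n/2}+1-n}\,d_{\fl{n/2}+1}\quad(n\ge 6).
\end{equation*}
Writing $\varepsilon_n$ for the second term on the right, I would first check that the exponent $\fl{n/2}+1-n$ equals $1-n/2$ when $n$ is even and $(1-n)/2$ when $n$ is odd, so that in either case $2^{\fl{n/2}+1-n}\le 2\cdot 2^{-n/2}$. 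Combining this with the uniform bound $d_{\fl{n/2}+1}\le 1/16$ supplied by \eqref{eq:dn-initial-bounds} yields $|\varepsilon_n|\le \tfrac18\,2^{-n/2}$, so that $\varepsilon_n=O(2^{-n/2})$.

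Next I would observe that the relation $d_n=d_{n-1}+\varepsilon_n$ exhibits $d_n$ as a partial sum: for every $n\ge 6$,
\begin{equation*}
d_n=d_5+\sum_{k=6}^n \varepsilon_k.
\end{equation*}
Since $\sum_{k\ge 6}|\varepsilon_k|\le \tfrac18\sum_{k\ge 6}2^{-k/2}<\infty$, the series $\sum_k\varepsilon_k$ converges absolutely, and therefore the limit $\alpha=\lim_{n\to\infty}d_n=d_5+\sum_{k=6}^\infty\varepsilon_k$ exists. Passing the lower bound $d_n\ge 1/64$ of \eqref{eq:dn-initial-bounds} to the limit gives $\alpha\ge 1/64>0$, so $\alpha$ is strictly positive.

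Finally, to obtain the rate \eqref{eq:dn-asymptotic1} I would estimate the tail of this series. From $d_n-\alpha=-\sum_{k=n+1}^\infty\varepsilon_k$ we get
\begin{equation*}
|d_n-\alpha|\le \sum_{k=n+1}^\infty|\varepsilon_k|\le \frac18\sum_{k=n+1}^\infty 2^{-k/2}=O\!\left(2^{-n/2}\right),
\end{equation*}
since the right-hand sum is geometric with ratio $2^{-1/2}$. This is precisely the claimed estimate $d_n=\alpha+O(2^{-n/2})$.

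The argument is essentially routine once the recurrence has been rescaled; there is no genuine obstacle. The only points requiring care are the correct evaluation of the exponent $\fl{n/2}+1-n$ in the two parity cases and the verification that the increments $\varepsilon_n$ are summable, both of which rely on the uniform boundedness of $\{d_n\}$ furnished by Lemma~\ref{lem:cn-initial-bounds}.
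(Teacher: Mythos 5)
Your proposal is correct and follows essentially the same route as the paper: rescale the recurrence to $d_n=d_{n-1}+O(2^{-n/2})$ using the uniform bounds from Lemma \ref{lem:cn-initial-bounds}, deduce convergence from the summability of the increments, get positivity from the lower bound $d_n\ge 1/64$, and read off the rate from the geometric tail. The paper phrases the convergence step as a Cauchy-sequence argument on the iterated relation rather than as an absolutely convergent telescoping series, but this is only a cosmetic difference.
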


\begin{proof}
Substituting $c_n=2^nd_n$ into the recurrences
\eqref{eq:cn-recurrence-odd}
and \eqref{eq:cn-recurrence-even}, we obtain
\begin{align}
\label{eq:dn-recurrence-odd}
d_{2m+1}&=d_{2m}+2^{-m}d_{m+1}\quad (m\ge 3),
\\
\label{eq:dn-recurrence-even}
d_{2m}&=d_{2m-1}-2^{-m+1}d_{m+1}\quad (m\ge 3).
\end{align}
Since, by \eqref{eq:dn-initial-bounds}, $d_n$ is bounded,
the second term on the right of
\eqref{eq:dn-recurrence-odd}
and \eqref{eq:dn-recurrence-even} is of order $O(2^{-m})$, so we have
\begin{equation*}
d_n=d_{n-1}+O\left(2^{-n/2}\right)\quad (n\ge 6).
\end{equation*}
Iterating this relation gives, for any integer $k\ge1$,
\begin{equation}
\label{eq:dn-estimate1}
d_{n}=d_{n+k}+
O\left(\sum_{i=1}^k 2^{-(n+i)/2}\right)
=d_{n+k}+ O\left(2^{-n/2}\right)
\quad (n\ge 5),
\end{equation}
where the constant implied by the $O$-notation is independent of $k$ and $n$.
Hence the sequence $\{d_n\}$ is a Cauchy sequence and therefore
has a limit, $\alpha=\lim_{n\to\infty} d_n$.

It follows from \eqref{eq:dn-initial-bounds} that $\alpha$ is strictly
positive.
Moreover, letting $k\to\infty$ in \eqref{eq:dn-estimate1}, we obtain
$d_n=\alpha+O(2^{-n/2})$, which is the desired estimate
\eqref{eq:dn-asymptotic1}. This completes the proof of Lemma
\ref{lem:dn-convergence1}.  \end{proof}

\begin{lem}
\label{lem:dn-asymptotic2}
We have
\begin{align}
\label{eq:dn-asymptotic-even}
d_{2m}&=\alpha+O\left(2^{-(3/2)m}\right)
\quad (m\ge 4),
\\
\label{eq:dn-asymptotic-odd}
d_{2m+1}&=\alpha\left(1+2^{-m}\right)+ O\left(2^{-(3/2)m}\right)
\quad (m\ge 4).
\end{align}
\end{lem}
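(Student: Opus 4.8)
The plan is to bootstrap the crude estimate $d_n=\alpha+O(2^{-n/2})$ from Lemma \ref{lem:dn-convergence1} into the sharper bounds of the lemma by exploiting a cancellation that occurs when the two recurrences \eqref{eq:dn-recurrence-odd} and \eqref{eq:dn-recurrence-even} are combined. Since $d_n\to\alpha$ and the increments $d_{k+1}-d_k$ are $O(2^{-k/2})$ by \eqref{eq:dn-asymptotic1} and hence summable, I would start from the telescoping identity
\[
\alpha-d_{2m}=\sum_{k=2m}^{\infty}(d_{k+1}-d_k).
\]

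Next I would group the terms of this series into consecutive pairs, one pair for each $j\ge m$, consisting of the step from $d_{2j}$ to $d_{2j+1}$ and the step from $d_{2j+1}$ to $d_{2j+2}$. The recurrence \eqref{eq:dn-recurrence-odd} gives $d_{2j+1}-d_{2j}=2^{-j}d_{j+1}$, while \eqref{eq:dn-recurrence-even}, with $m$ replaced by $j+1$, gives $d_{2j+2}-d_{2j+1}=-2^{-j}d_{j+2}$. Adding these, the $j$th pair contributes exactly $2^{-j}(d_{j+1}-d_{j+2})$, so that
\[
\alpha-d_{2m}=\sum_{j=m}^{\infty}2^{-j}\bigl(d_{j+1}-d_{j+2}\bigr).
\]
The crucial point is the cancellation of the two leading factors $2^{-j}\alpha$: because both $d_{j+1}$ and $d_{j+2}$ differ from $\alpha$ by only $O(2^{-j/2})$, their difference is $O(2^{-j/2})$ rather than $O(1)$, so each summand is $O(2^{-3j/2})$. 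Summing the resulting geometric-type series from $j=m$ then yields $\alpha-d_{2m}=O(2^{-3m/2})$, which is the even case \eqref{eq:dn-asymptotic-even}.

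For the odd case I would simply feed this back into \eqref{eq:dn-recurrence-odd}: writing $d_{2m+1}=d_{2m}+2^{-m}d_{m+1}$ and substituting $d_{2m}=\alpha+O(2^{-3m/2})$ together with $d_{m+1}=\alpha+O(2^{-m/2})$, so that $2^{-m}d_{m+1}=\alpha2^{-m}+O(2^{-3m/2})$, gives $d_{2m+1}=\alpha(1+2^{-m})+O(2^{-3m/2})$, which is \eqref{eq:dn-asymptotic-odd}.

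The main obstacle, and the one genuinely load-bearing step, is recognizing that pairing the odd and even recurrences produces the \emph{difference} $d_{j+1}-d_{j+2}$ rather than a single value $d_{j+1}$: without this cancellation one would only recover the original $O(2^{-m})$ bound, whereas the cancellation supplies the extra factor $2^{-j/2}$ needed to reach $O(2^{-3m/2})$. One must also take care to retain the secondary term $\alpha 2^{-m}$ in the odd case, since it dominates the error term and is a genuine part of the asymptotics.
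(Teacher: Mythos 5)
Your proposal is correct and is essentially the paper's own argument: pairing the odd and even recurrences to produce the cancellation $2^{-j}(d_{j+1}-d_{j+2})=O(2^{-3j/2})$ is exactly the identity $d_{2m}=d_{2m-2}+2^{-m+1}(d_m-d_{m+1})$ that the paper iterates, and your telescoping sum is just the limit $k\to\infty$ of that iteration. The odd case is handled identically in both arguments.
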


\begin{proof}
Iterating \eqref{eq:dn-recurrence-odd}
and \eqref{eq:dn-recurrence-even} yields
\begin{align}
\label{eq:dn-recurrence-even2}
d_{2m}&=d_{2m-2} + 2^{-m+1}\left(d_m-d_{m+1}\right)\quad (m\ge 4).
\end{align}
Since, by Lemma \ref{lem:dn-convergence1},
$d_m=\alpha +O(2^{-m/2})$ and $d_{m+1}=\alpha +O(2^{-m/2})$,
the last term in \eqref{eq:dn-recurrence-even2} is of order
$O(2^{-m+1}\cdot 2^{-m/2})=O(2^{-(3/2)m})$, so we have
\[
d_{2m} = d_{2m-2}
+ O\left(2^{-(3/2)m}\right) \quad (m\ge 4).
\]
It follows that, for any $k\ge 1$,
\begin{align*}
d_{2m}&=d_{2m+2k}+
O\left(\sum_{i=1}^k2^{-(3/2)(m+i)}\right) =d_{2m+2k}+
O\left(2^{-(3/2)m}\right).
\end{align*}
Letting $k\to\infty$, we obtain the first estimate of the lemma,
\eqref{eq:dn-asymptotic-even}.

The second estimate, \eqref{eq:dn-asymptotic-odd}, follows on noting
that, by \eqref{eq:dn-recurrence-odd} and
\eqref{eq:dn-asymptotic1},
\begin{align*}
d_{2m+1}&=d_{2m}+2^{-m}d_{m+1}
\\
&=\alpha +O\left(2^{-(3/2)m}\right)
+2^{-m}\left(\alpha+O\left(2^{-(m+1)/2}\right)\right)
\\
&=\alpha \left(1+2^{-m}\right)
+O\left(2^{-(3/2)m}\right)
\end{align*}
This completes the proof of Lemma \ref{lem:dn-asymptotic2}.
\end{proof}

\begin{proof}[Proof of Theorem \ref{thm:cn-asymptotic}]
The desired asymptotic estimate \eqref{eq:cn-asymptotic-DUP}
follows from the estimate
\eqref{eq:dn-asymptotic-even} of Lemma
\ref{lem:dn-asymptotic2} when $n=2m$ is even,
and from \eqref{eq:dn-asymptotic-odd} when $n=2m+1$ is odd.
\end{proof}

It is clear that the iterative procedure we have used in this proof
could, in principle, be continued to extract further main terms from the
error term $O(2^{n/4})$ in \eqref{eq:cn-asymptotic}.  For example, one
additional iteration would yield an additional
main term of size $2^{n/4}$, with a
coefficient depending on the remainder of $n$ modulo $4$, along with an
error term of the form $O(2^{n/8})$.

\section{Proof of Theorem \protect\ref{thm:alpha-series}}
\label{sec:theorem3-proof}
	
Thereom \ref{thm:alpha-series} states that the 
constant $\alpha$ in Theorem \ref{thm:cn-asymptotic}
satisfies  \eqref{eq:alpha-series}, i.e., 
	\begin{equation}
        \label{eq:alpha-series-DUP}
        \alpha = \frac{1}{16}-2\sum_{n=4}^\infty \frac{c_n}{4^{n}}.
    \end{equation}
Our proof of \eqref{eq:alpha-series-DUP} is based on the following
lemma.

\begin{lem}
    \label{lem:alpha-cn-sum}
    We have
    \begin{align}
    \label{eq:cn-finite-sum}
        c_{2m+1}=4^{m-2}c_5-\sum_{i=4}^{m+1}c_i4^{m+1-i}
	\quad (m\ge 3).
    \end{align}
\end{lem}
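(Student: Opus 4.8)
The plan is to prove the identity \eqref{eq:cn-finite-sum} by induction on $m$, using the once-iterated recurrence \eqref{eq:cn-recurrence-odd2}, namely $c_{2m+1}=4c_{2m-1}-c_{m+1}$ (valid for $m\ge 4$), which expresses $c_{2m+1}$ in terms of the previous odd value $c_{2m-1}$ and a single lower-index correction $c_{m+1}$. Writing $F(m)$ for the right-hand side of \eqref{eq:cn-finite-sum}, the point is that the structure of this recurrence---multiply the previous odd value by $4$ and subtract one new term---matches exactly the structure of $F(m)$, in which $4^{m-2}c_5$ plays the role of a running geometric part and the subtracted sum accumulates precisely one term $c_i4^{m+1-i}$ at each step.

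For the base case $m=3$, I would verify $c_7=F(3)=4c_5-c_4$ directly. This follows from the two basic recurrences \eqref{eq:cn-recurrence-odd} and \eqref{eq:cn-recurrence-even}: taking $m=3$ gives $c_7=2c_6+c_4$ and $c_6=2c_5-c_4$, whence $c_7=2(2c_5-c_4)+c_4=4c_5-c_4$. (Alternatively, one can read off $c_7=6$ and $c_5=c_4=2$ from Table \ref{table:cn-initial-values}, which gives $4\cdot 2-2=6$.)

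For the inductive step, I would assume \eqref{eq:cn-finite-sum} holds with $m$ replaced by $m-1$, i.e.\ $c_{2m-1}=F(m-1)$, where $m\ge 4$ so that $m-1\ge 3$ is covered by the base case and earlier steps. Applying \eqref{eq:cn-recurrence-odd2} then gives $c_{2m+1}=4c_{2m-1}-c_{m+1}=4F(m-1)-c_{m+1}$. Multiplying the expression for $F(m-1)$ through by $4$ raises the leading power to $4^{m-2}c_5$ and rescales each summand to $c_i4^{m+1-i}$ for $4\le i\le m$; the leftover term $-c_{m+1}$ is exactly $-c_{m+1}4^{m+1-(m+1)}$, so it extends the summation range to include $i=m+1$. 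This yields $c_{2m+1}=F(m)$, completing the induction.

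The argument involves no substantive obstacle beyond routine algebra; the only two points needing care are the alignment of index ranges---the recurrence \eqref{eq:cn-recurrence-odd2} is available precisely for $m\ge 4$, which dovetails with an induction started from the $m=3$ base case---and the bookkeeping that folds the isolated term $c_{m+1}$ into the geometric sum by writing it as the $i=m+1$ summand.
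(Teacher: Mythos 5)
Your proposal is correct and follows essentially the same route as the paper: induction on $m$ with base case $c_7=4c_5-c_4$ and inductive step driven by the iterated recurrence $c_{2m+1}=4c_{2m-1}-c_{m+1}$, folding the new term $c_{m+1}$ into the sum as the $i=m+1$ summand. The only differences are cosmetic (you step from $m-1$ to $m$ rather than $m$ to $m+1$, and you offer a derivation of the base case from the recurrences in place of the paper's table lookup).
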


\begin{proof} 
We proceed by induction.  For $m=3$, \eqref{eq:cn-finite-sum} reduces to
$c_7=4\cdot c_5-c_4$, which can be verified directly using the values
$c_7=6$ and $c_5=c_4=2$ from Table \ref{table:cn-initial-values}.

Now let $m\ge 3$ and assume \eqref{eq:cn-finite-sum} holds for $m$.
Then, using the recurrence \eqref{eq:cn-recurrence-odd2}, 
we have
\begin{align*}
    c_{2(m+1)+1}&=4c_{2m+1}-c_{(m+1)+1}
   \\ 
    &= 4\left(4^{m-2}c_5-\sum_{i=4}^{m+1}c_i4^{m+1-i}\right)-c_{m+2}
    \\
 &=4^{(m+1)-2}c_5-\sum_{i=4}^{(m+1)+1}c_i4^{(m+1)+1-i}.
\end{align*}
  Hence \eqref{eq:cn-finite-sum} holds with $m+1$ in place of $m$,
  completing the induction.  
\end{proof}

\begin{proof}[Proof of Theorem \ref{thm:alpha-series}]
Dividing both sides of \eqref{eq:cn-finite-sum} by $2^{2m+1}$ we obtain 
\begin{align}
\label{eq:cn-finite-sum2}
\frac{c_{2m+1}}{2^{2m+1}} &= \frac{c_5}{2^{5}}
-2\sum_{i=4}^{m+1}\frac{c_i}{4^i}
=\frac{1}{16}-2\sum_{i=1}^{m+1}\frac{c_i}{4^i},
\end{align}
upon substituting the value $c_5=2$.
Letting $m\to\infty$ in \eqref{eq:cn-finite-sum2} and noting that, by 
Theorem \ref{thm:cn-asymptotic}, $\lim_{n\to\infty}c_n2^{-n}=\alpha$, 
yields the desired formula \eqref{eq:alpha-series-DUP} for $\alpha$. 
\end{proof}

\section{Proof of Theorem \protect\ref{thm:flipped-optimal-moves}}
\label{sec:theorem4-proof}

Theorem \ref{thm:flipped-optimal-moves} asserts 
that $HH\dots H$ and $TT\dots T$ are the unique
optimal strings for Player I in the flipped Penney-Ante game, and that
with these strings Player I has even odds, i.e., a winning probability
of $1/2$, under optimal play by Player II.

Recall that in the flipped game the player
whose string appears \emph{last} in a random head/tail sequence wins
the game. Thus, if $A$ and $B$ are the strings chosen by Players I and
II, respectively, then the odds in favor of Player I are 
\begin{equation}
\label{eq:flipped-game-odds}
q(A,B)= \frac{P(\text{$B$ appears before $A$})}
{P(\text{$A$ appears before $B$})},
\end{equation}
which, by Conway's formula \eqref{eq:conway-algorithm}, can be expressed 
in terms of Conway numbers:
\begin{equation}
\label{eq:qAB-definition}
q(A,B)
=\frac{C(A,A)-C(A,B)}{C(B,B)-C(B,A)}.
\end{equation}
To prove Theorem \ref{thm:flipped-optimal-moves}, we need to show 
that the strings $A=HH\dots H$ and $A=TT\dots T$
are the unique strings for which 
$q(A,B)\ge1$ for all choices of $B\not=A$, and that  
equality holds for at least one such choice. 
This will follow from Lemma \ref{lem:flipped-game-odds} below.
Here, and in the remainder of this section, all strings are
assumed to be of a fixed length $n\ge3$.

\pagebreak[3]

\begin{lem}
\label{lem:flipped-game-odds}
\mbox{}
\begin{itemize}
\item[(i)] If $A=HH\dots H$,
then for any string $B\not=A$ we have $q(A,B)\ge 1$, with equality 
holding if and only if $B$ is one of the following 
two $n$-bit strings:
\begin{equation}
\label{eq:HHH-best-response}
TT\dots T, \quad HH\dots HT.
\end{equation}

\item[(ii)]
If $A=TT\dots T$,
then for any string $B\not=A$ we have $q(A,B)\ge 1$, with equality 
holding if and only if $B$ is one of the following 
two $n$-bit strings:
\begin{equation}
\label{eq:TTT-best-response}
HH\dots H, \quad TT\dots TH. 
\end{equation}

\item[(iii)] If $A$ is not of the form 
$A=HH\dots H$ or $A=TT\dots T$,
then there exists a string $B\not=A$ such that $q(A,B)<1$.

\end{itemize}
\end{lem}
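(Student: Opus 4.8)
The plan is to prove part (i) by a direct Conway-number computation, obtain part (ii) from it by the symmetry interchanging $H$ and $T$, and deduce part (iii) from (i) and (ii) using the identity $q(A,B)q(B,A)=1$, which is immediate from \eqref{eq:flipped-game-odds}. All the substance is in part (i).

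Fix $A=HH\dots H$ and a string $B\ne A$, and let $p$ and $s$ be the numbers of leading and trailing $H$'s of $B$; since $B$ contains a $T$ we have $0\le p,s\le n-1$. Reading Definition \ref{def:conway-number} off the all-heads string, I would first record the three Conway numbers entering \eqref{eq:qAB-definition}: every suffix of $A$ is a block of heads, so $C(A,A)=2^n-1$, while a prefix of $B$ matches a block of heads exactly as far as its run of leading heads extends, giving $C(A,B)=2^p-1$, and likewise $C(B,A)=2^s-1$. Substituting into \eqref{eq:qAB-definition} yields $q(A,B)=(2^n-2^p)/(C(B,B)-2^s+1)$, whose denominator is positive because $C(B,B)\ge 2^{n-1}>2^s-1$. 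Hence the inequality $q(A,B)\ge 1$ is equivalent to the autocorrelation bound $C(B,B)\le 2^n-2^p+2^s-1$.

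Establishing this bound is the main obstacle, though the key observation is short. Writing the autocorrelation as a sum over border lengths, $C(B,B)=\sum_\ell 2^{\ell-1}$ with $\ell$ ranging over all lengths for which the length-$\ell$ prefix and suffix of $B$ agree (so $\ell=n$ always contributes $2^{n-1}$), the desired bound becomes $\sum_\ell 2^{\ell-1}\le (2^s-1)+(2^{n-1}-2^p)$, where $\ell$ now runs over the \emph{proper} border lengths ($\ell\le n-1$); the right-hand side is precisely $\sum_{\ell\in\{1,\dots,s\}\cup\{p+1,\dots,n-1\}}2^{\ell-1}$. It therefore suffices to show that $B$ has no proper border whose length $\ell$ lies in the gap $\{s+1,\dots,p\}$: such a border would have an all-heads prefix (as $\ell\le p$), forcing its suffix to be all heads and hence $s\ge\ell$, contradicting $\ell>s$. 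Since the proper border lengths thus form a subset of $\{1,\dots,s\}\cup\{p+1,\dots,n-1\}$, comparing these two sums of distinct powers of two gives the bound, and equality forces the proper border lengths to be exactly that set.

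It remains to read off the equality cases. If $s>p$ the right-hand side already exceeds $2^n-1\ge C(B,B)$, so the inequality is strict; if $1\le p\le n-2$ the set contains $n-1$, which would force a border of length $n-1$ and hence a constant $B$, impossible since $b_1=H$ and $B\ne A$. The only survivors are $p=0$, forcing $s=0$ and (again via the length-$(n-1)$ border) $B=TT\dots T$, and $p=n-1$, forcing $B=HH\dots HT$, which has no proper border and indeed attains equality; this is exactly \eqref{eq:HHH-best-response}. Part (ii) is the mirror image under $H\leftrightarrow T$, giving \eqref{eq:TTT-best-response}. For part (iii), given $A$ neither constant, I take $B=HH\dots H$ if $A\ne HH\dots HT$ and $B=TT\dots T$ if $A=HH\dots HT$; in each case $A$ fails to be one of the two equality strings of \eqref{eq:HHH-best-response} or \eqref{eq:TTT-best-response}, so by (i) or (ii) we have $q(B,A)>1$ and therefore $q(A,B)=1/q(B,A)<1$, as required. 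I expect the autocorrelation bound and its equality case to be the only delicate point; parts (ii) and (iii) are then formal.
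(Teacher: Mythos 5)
Your proof is correct, and it reaches the same three conclusions by the same overall skeleton as the paper: compute $C(A,A)=2^n-1$, $C(A,B)=2^p-1$, $C(B,A)=2^s-1$ from the runs of leading and trailing heads, get part (ii) by the $H\leftrightarrow T$ symmetry, and get part (iii) from (i) and (ii) via $q(A,B)=1/q(B,A)$. Where you genuinely diverge is in the one nontrivial step, the treatment of $C(B,B)$. The paper uses a crude dichotomy on the \emph{second} bit of the autocorrelation: if that bit is $1$ then $B$ is constant (giving the $TT\dots T$ equality case), and otherwise $C(B,B)\le 2^n-1-2^{n-2}$, which forces $q(A,B)>1$ unless $p=n-1$, whereupon $B=HH\dots HT$ is checked by hand. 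You instead prove the \emph{sharp} bound $C(B,B)\le 2^n-2^p+2^s-1$, which is exactly equivalent to $q(A,B)\ge1$, by observing that $B$ can have no proper border of length in the gap $\{s+1,\dots,p\}$ (an all-heads prefix of that length would force at least that many trailing heads); the equality characterization then falls out uniformly from when the border set saturates, rather than from two ad hoc verifications. Your route is slightly longer but tighter, and it makes the equality analysis systematic; the paper's is shorter because the crude bound happens to suffice. One small presentational point: when you identify $(2^s-1)+(2^{n-1}-2^p)$ with $\sum_{\ell\in\{1,\dots,s\}\cup\{p+1,\dots,n-1\}}2^{\ell-1}$ you are implicitly assuming the two index sets are disjoint, i.e., $s\le p$; you do dispose of the case $s>p$ correctly, but that case split should come before the identification rather than after.
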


\begin{proof}
(i)
Assume that $A$ is the $n$-bit string $HH\dots H$ and $B=b_1\dots b_n$
is  a string of length $n$ different from $A$.

Let $s$ be the number of leading bits $H$ in $B$, and let $t$ be the
number of trailing bits $H$ in $B$. Since the string $B$ is different
from the string $A=HH\dots H$, it must contain at least one $T$, so we have
$0\le s,t\le n-1$ and  $s+t<n$. 

Since for each $i\in\{1,\dots,n\}$, the prefix and suffix of length $i$
of $A=HH\dots H$ match, all bits in the Conway number $C(A,A)$ are $1$
and we thus have 
\begin{equation}
\label{eq:CAA}
C(A,A)=\sum_{i=0}^{n-1}2^i=2^n-1.
\end{equation}

Next, note that
at each step in the computation of the Conway number
$C(A,B)=C(HH\dots H,B)$, a 
prefix of $B$ is compared with a suffix of the string $HH\dots H$
of the same length,  so a match occurs if and only if the
prefix consists of all $H$'s. This happens for the last $s$ comparisons,  
so the final $s$ bits in the Conway number $C(A,B)$ are equal to $1$, while all
other bits are $0$. Hence we have 
\begin{equation}
\label{eq:CAB}
C(A,B)=\sum_{i=0}^{s-1}2^i=2^s-1.
\end{equation}
An analogous argument yields 
\begin{equation}
\label{eq:CBA}
C(B,A)=\sum_{i=0}^{t-1}2^i=2^t-1.
\end{equation}

Finally consider the Conway number $C(B,B)$.  
Since $B$ matches itself, the first bit in this number must
be $1$. If the second bit of $C(B,B)$ is
also $1$, then we must have $b_i=b_{i+1}$ for $i=1,2,\dots,n-1$
and hence $b_1=b_2=\dots =b_{n}$.  Since we assumed that $B$ 
is different from the string $A=HH\dots H$, $B$ must be equal to the
string $TT\dots T$.  It follows that  $s=t=0$ and  
therefore, by \eqref{eq:CAB} and \eqref{eq:CBA}, $C(A,B)=C(B,A)=0$. 
Moreover, using the same argument as for 
\eqref{eq:CAA} we see that $C(B,B)=C(TT\dots T, TT\dots T)=2^n-1$.
Hence we have
\begin{equation}
\label{eq:qATTTT}
q(A,TT\dots T)=\frac{(2^{n}-1)-0}{(2^{n}-1)-0}=1.
\end{equation}

If the second bit of $C(B,B)$ is $0$, then
\begin{equation}
\label{eq:CBB-bound}
C(B,B)\le 2^{n-1}+2^{n-3}+\dots +2^0=2^{n}-1-2^{n-2}.
\end{equation}
Substituting \eqref{eq:CAA}, \eqref{eq:CAB}, \eqref{eq:CBA}, 
and \eqref{eq:CBB-bound} into \eqref{eq:qAB-definition}, we obtain the bound
\begin{align}
\label{eq:qAB-bound}
q(A,B)\ge 
\frac{(2^{n}-1)-(2^s-1)}{(2^n-1-2^{n-2})-(2^t-1)}
\ge  \frac{2^n-2^s}{2^n-1-2^{n-2}}.
\end{align}
It follows that
$q(A,B)>1$ unless $2^s\ge 2^{n-2}+1$.
The latter case can only occur if $s=n-1$ and $t=0$, i.e., 
if $B$ is the string $B=HH\dots HT$. By 
\eqref{eq:CAB} and \eqref{eq:CBA} we have in this case
$C(A,B)=2^s-1=2^{n-1}-1$ and $C(B,A)=2^t-1=0$. Moreover, in the computation
of the autocorrelation of $B=HH\dots HT$, a match occurs only at the first
bit, so we have $C(B,B)=2^{n-1}$. We thus obtain
\begin{align}
\label{eq:qAHHHT}
q(A,HH\dots HT)=\frac{(2^{n}-1)-(2^{n-1}-1)} {2^{n-1}-0} =1.
\end{align}
Altogether we have shown that $q(A,B)>1$ if $B$ is \emph{not} of the
form $TT\dots T$ or $HH\dots HT$, and $q(A,B)=1$ if $B$ is of this form.
This proves part (i) of the lemma.

\bigskip

(ii) This part follows by interchanging the roles of $H$ and $T$ in the proof
of part (i).

\bigskip

(iii) 
Suppose $A$ is not of the form $HH\dots H$ or $TT\dots T$.
Let $B_1=HH\dots H$ and $B_2=TT\dots T$. 
We will show that $q(A,B)<1$ holds for at least one of the strings 
$B=B_1$ and $B=B_2$.

Applying part (i) with 
$A$ replaced by $B_1$, we obtain $q(B_1,A)>1$ if $A$ is not of the form
(I) $HH\dots HT$ (note that, by our assumption, $A$ is not of the form $HH
\dots H$ or $TT\dots T$). Similarly, applying part (ii)
we obtain $q(B_2,A)>1$ if $A$ is not of the form (II) $TT\dots TH$.
But since a string cannot be 
equal to both of the strings (I) and (II), it follows that
at least one of the inequalities $q(B_1,A)>1$ and $q(B_2,A)>1$ holds.
Since $q(A,B)=1/q(B,A)$, we conclude that
at least one of the inequalities  $q(A,B_1)<1$ and $q(A,B_2)<1$ holds.
This proves part (iii) of the lemma and completes the proof of Theorem
\ref{thm:flipped-optimal-moves}.
\end{proof}

\section{Open Problems and Conjectures}
\label{sec:further-results}

In this section we discuss some open problems related to our results, 
present some numerical data, and formulate several conjectures suggested
by the data.

\paragraph{Arithmetic nature of $\alpha$.}
Expanding the proportionality constant $\alpha$ in
Theorem \ref{thm:cn-asymptotic} in base $2$ gives
\begin{equation}
\label{eq:alpha-binary}
\alpha=0.001010011001100111010000101011000001011010010011010100101\dots
\end{equation}
There is no obvious periodicity pattern in this expansion, so it seems  likely that $\alpha$ is irrational. In fact, numerical data  based on the first $160,000$ bits in this expansion  suggests that $\alpha$ is a \emph{normal} number with respect to base $2$, i.e., that each  binary string of length $n$ occurs with the expected frequency, $1/2^n$, in the sequence of digits of $\alpha$. Our computations indicate that this is indeed the case for strings  of length $n\le 8$.

The integer sequence that encodes the positions of the $1$-bits in the expansion \eqref{eq:alpha-binary}
is $3$, $5$, $8$, $9$, $12$, $13$, $16$, $17$, $18$, $20$,$\dots $. 
This sequence does not seem to have a closed form, and it is not listed in
the \emph{On-Line Encyclopedia of Integer Sequences} \cite{oeis}.

\paragraph{Winning probabilities under random instead of optimal strategies.}
Our basic assumption in this paper---as in prior work such as 
Guibas-Odlyzko \cite{guibas-odlyzko1981}, Csirik \cite{csirik1992}, and 
Felix \cite{felix2006}---was that both players were skilled players,
with each employing a strategy that maximizes their respective winning
probabilities.

It is natural to ask how much of a penalty a player
incurs by using instead a random strategy, i.e., by choosing a string at
random from all $2^n$ strings of length $n$.  Such a random strategy could model an unskilled player who is not familiar with the theory of the Penney-Ante game.

To investigate this question, let $\psIsII$ be the probability that
Player II wins in the Penney-Ante game on strings of length $n$ assuming 
Player I employs strategy $\sI$ and Player II employs strategy $\sII$.
We restrict to the case when $\sI,\sII\in\{\opt,\rand\}$, where $\opt$
denotes a strategy that is optimal (in the sense of maximizing the player's winning probability assuming optimal play by the opponent), while $\rand$ denotes the strategy
in which the player chooses one of the $2^n$ strings at random. 

In particular, $\poptopt$  is the probability that Player II wins assuming both
players play optimally; by Csirik's result (Proposition
\ref{prop:csirik-theorem3}), this probability is equal to 
\begin{equation}
\label{eq:poptopt-formula}
\poptopt=\frac{2^{n-1}+1}{3\cdot 2^{n-2}+2}=\frac{2}{3}-\frac1{3(3\cdot
2^{n-2}+2)}.
\end{equation}
We are interested in comparing this probability to the probabilities
$\prandopt$ and $\poptrand$, the winning probabilities for Player II assuming Player I (resp.~Player II) 
plays randomly while Player II (resp.~Player I) maintains an   
optimal strategy.  How much of a reduction in the winning probability does a player incur by using a random strategy instead of an optimal strategy?  

We first consider the case when Player I plays randomly, while Player II maintains an
optimal strategy. 
Table \ref{table:random-I-table} shows the
winning probabilities for Player II assuming either optimal play by Player I (column $\poptopt$) or random play by Player I (column $\prandopt$). The probabilities $\poptopt$ here are those given by the exact formula  \eqref{eq:poptopt-formula}, 
while the probabilities $\prandopt$ were determined experimentally, using computer simulations. As expected, under random play by Player I, Player II has an increased winning probability, but the difference appears to be exponentially small:  For example, for $n=15$ the two probabilities 
agree in their first three digits, while for $n=19$ they agree in their first four digits and for $n=22$ they agree in their first five digits.
\begin{table}[H]
\begin{center}
\renewcommand{\arraystretch}{1.1}
\begin{tabular}{|c|c|c|c|c|}
\hline
$n$ & $\poptopt$ & $\prandopt$ & $2^n(\prandopt-2/3)/n$ \\
\hline
5 & 0.65384615 & 0.71868171 & 0.33289627 \\
6 & 0.66000000 & 0.69865016 & 0.34115722 \\
7 & 0.66326531 & 0.68739336 & 0.37900236 \\
8 & 0.66494845 & 0.67913922 & 0.39912157 \\
9 & 0.66580311 & 0.67411092 & 0.42349539 \\
10 & 0.66623377 & 0.67094023 & 0.43761240 \\
11 & 0.66644993 & 0.66910562 & 0.45408969 \\
12 & 0.66655823 & 0.66803837 & 0.46820972 \\
13 & 0.66661243 & 0.66743344 & 0.48318813 \\
14 & 0.66663954 & 0.66708843 & 0.49358630 \\
15 & 0.66665310 & 0.66689731 & 0.50385310 \\
16 & 0.66665989 & 0.66679196 & 0.51318555 \\
17 & 0.66666328 & 0.66673437 & 0.52202351 \\
18 & 0.66666497 & 0.66670302 & 0.52947573 \\
19 & 0.66666582 & 0.66668611 & 0.53649966 \\
20 & 0.66666624 & 0.66667702 & 0.54277612 \\
21 & 0.66666645 & 0.66667216 & 0.54859471 \\
22 & 0.66666656 & 0.66666957 & 0.55383968 \\
23 & 0.66666661 & 0.66666820 & 0.55868718 \\
24 & 0.66666664 & 0.66666747 & 0.56313417 \\
\hline
\end{tabular}
\caption{Optimal versus random play by Player I.}

\label{table:random-I-table}
\end{center}
\end{table}

The last column of Table \ref{table:random-I-table} suggests 
the following more precise conjecture for the behavior 
of $\prandopt$ as $n\to\infty$.

\begin{conj}
\label{conj:rand-opt}
The probability $\prandopt$ that Player II wins assuming random play by
Player I and optimal play by Player II satisfies
\begin{equation}
\label{eq:conj-rand-opt}
\prandopt=\frac{2}{3}+O\left(\frac {n}{2^{n}}\right)\quad (n\to\infty).
\end{equation}
\end{conj}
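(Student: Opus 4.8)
The plan is to compute $\prandopt$ exactly as an average over Player I's random choice and then extract the main term $2/3$ together with a sharp bound on the deviation. Since Player I plays uniformly at random among the $2^n$ strings and Player II always responds optimally, we have
\begin{equation}
\prandopt=\frac{1}{2^n}\sum_{A}\bigl(1-P(\text{Player I wins against the best response to }A)\bigr),
\end{equation}
where the sum runs over all $2^n$ strings $A$ and the best response is the unique optimal Player II string guaranteed by the Guibas--Odlyzko--Csirik--Felix theory. First I would set $p(A)=P(\text{Player I wins against the best response }B(A))$, so that $\prandopt=1-2^{-n}\sum_A p(A)$. The heart of the matter is understanding the distribution of $p(A)$ as $A$ ranges over all strings. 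Csirik's result (Proposition \ref{prop:csirik-theorem3}) pins down $p(A)$ exactly on the \emph{optimal} strings, where it equals $(2^{n-2}+1)/(3\cdot 2^{n-2}+2)$, i.e.\ $1/3$ up to an exponentially small correction; on these $c_n$ strings one already has $1-p(A)=\poptopt$ by \eqref{eq:poptopt-formula}. The point of the conjecture is that the \emph{remaining} $2^n-c_n$ strings contribute an average that is also $1/3+O(\text{small})$, but crucially with total deviation only $O(n)$ rather than $O(2^n)$.

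Next I would organize the sum by the Conway-number/overlap structure of $A$. Using Conway's algorithm \eqref{eq:conway-algorithm}, the losing probability $p(A)$ is a ratio of differences of Conway numbers evaluated at $A$ and its best response $B(A)$; since $B(A)=HA'$ or $TA'$ with $A'$ the length-$(n-1)$ prefix of $A$ (Guibas--Odlyzko), the quantities $C(A,A)$, $C(B,B)$, $C(A,B)$, $C(B,A)$ are all controlled by the autocorrelation of $A$ and of $A'$. The plan is to write $p(A)=\tfrac{1}{3}+r(A)$ and show that $2^{-n}\sum_A r(A)=\tfrac{1}{3}-\prandopt+O(n/2^n)$ reduces to estimating how far $p(A)$ deviates from $1/3$ as a function of the largest proper overlap (equivalently, the second-highest $1$-bit of the autocorrelation of $A$). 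Strings with no short self-overlap have $p(A)$ extremely close to $1/3$, while strings whose autocorrelation has a $1$-bit in position $k$ contribute a deviation of size roughly $2^{-(n-k)}$; the count of such strings is $O(2^{k})$ by a standard correlation-polynomial count (cf.\ Lemma \ref{lem:autocorr-sum} and the forward-propagation argument behind Lemma \ref{lem:correctAutocorrelation}), so each overlap level $k$ contributes $O(2^{k})\cdot 2^{-(n-k)}\cdot 2^{-n}$ to the average. Summing a geometric-type series over $k$ and tracking the extra linear factor that arises from the $n$ possible overlap positions should yield the claimed $O(n/2^n)$.

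The main obstacle I anticipate is making the dependence of $p(A)$ on the overlap structure quantitative and uniform: one must show that $p(A)-1/3$ is not merely small on average but admits a bound of the form $|p(A)-1/3|\le c\,2^{-(n-k(A))}$, where $k(A)$ is the position of the highest nontrivial autocorrelation bit of $A$, and simultaneously that the number of strings with $k(A)=k$ is $O(2^{k})$ uniformly. The first estimate requires expanding Conway's ratio \eqref{eq:conway-algorithm} and verifying that, away from the optimal strings, the numerator and denominator differ from the balanced values $2^{n-1}$ and $3\cdot 2^{n-2}$ by amounts governed precisely by the overlap; the algebra here is delicate because both $A$ and its best response $B(A)$ enter, and the best response itself depends (via the Csirik--Felix dichotomy) on finer features of $A$ than just its autocorrelation. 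Reconciling these two dependencies cleanly, so that the per-level contributions genuinely telescope to $O(n/2^n)$ rather than $O(1/2^{n/2})$, is where I expect the real work to lie.
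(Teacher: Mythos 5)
The first thing to note is that the paper does not prove this statement at all: it is Conjecture \ref{conj:rand-opt}, offered in Section \ref{sec:further-results} as an open problem supported only by computer simulations (Table \ref{table:random-I-table} and Figure \ref{fig:random-I-figure}); the values of $\prandopt$ in that table are themselves obtained experimentally. So there is no proof in the paper to compare yours against, and a complete argument along your lines would settle an open question rather than reprove a known result.

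As it stands, your proposal is a heuristic plan rather than a proof, and the gaps you flag yourself are exactly where it is incomplete. The two load-bearing claims --- the pointwise bound $|p(A)-1/3|\le c\,2^{-(n-k(A))}$ in terms of the highest nontrivial autocorrelation bit, and the count $O(2^k)$ of strings at each overlap level --- are stated in incompatible conventions: if ``position $k$'' refers to the bit $\delta_k$ (of value $2^{n-k}$), then the number of strings with $\delta_k=1$ is indeed $2^{k-1}=O(2^k)$, but the induced relative perturbation of Conway's ratio is of order $2^{-k}$, not $2^{-(n-k)}$; taking the product as you wrote it, $O(2^k)\cdot 2^{-(n-k)}\cdot 2^{-n}=2^{2k-2n}$, the sum over $k$ is dominated by $k=n$ and gives $O(1)$, not $O(n/2^n)$. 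With a consistent convention each level contributes $\Theta(2^{-n})$ and the $n$ levels give the conjectured $n2^{-n}$, which is presumably what you intend, but this must be set up carefully. Beyond the bookkeeping, two substantive issues remain open: first, the deviation $p(A)-1/3$ is governed not by a single autocorrelation bit of $A$ but by all four Conway numbers $C(A,A)$, $C(A,B)$, $C(B,A)$, $C(B,B)$ with $B=B(A)$ depending on $A$ through the Csirik--Felix dichotomy, and for highly periodic $A$ (e.g.\ $HH\dots H$, whose best response wins with probability $1-2^{-n}$) the deviation is $\Theta(1)$, so the argument must be a genuine double count over string--bit pairs rather than a bound by the top bit alone; second, nothing in the proposal rules out reinforcement across levels that would alter the constant or the order, and the paper's own data (last column of Table \ref{table:random-I-table}) shows $2^n(\prandopt-2/3)/n$ still drifting at $n=24$. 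The skeleton is sensible and consistent with the paper's toolkit (Conway's algorithm, Lemma \ref{lem:autocorr-sum}, the forward-propagation idea behind Lemma \ref{lem:correctAutocorrelation}), but it does not yet constitute a proof.
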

In fact, Figure 
\ref{fig:random-I-figure} below suggests that
$2^n(\prandopt-2/3)$ is asymptotically linear.   If so, 
the asymptotic estimate \eqref{eq:conj-rand-opt} could be strengthened to 
\begin{equation}
\label{eq:conj-rand-opt-strong}
\prandopt-\frac{2}{3}\sim\frac {c n}{2^{n}}\quad (n\to\infty),
\end{equation}
where $c$ is a positive constant.

\begin{figure}[H]
    \centering
    \includegraphics[width=0.9\textwidth]{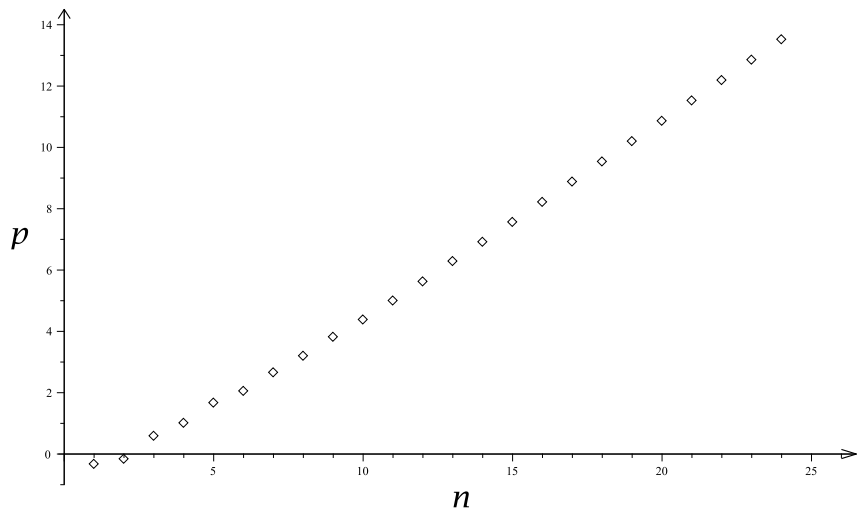}
    \caption{Plot of $2^n(\prandopt-2/3)$.}
    \label{fig:random-I-figure}
\end{figure}

We can similarly ask how much of a reduction in winning probabilities 
Player II incurs when employing a random strategy instead of playing
optimally.  Table \ref{table:random-II-table1} shows the
winning probabilities for Player II assuming optimal play by Player I and 
either optimal or random  play by Player II. 
As can be seen from this table, the difference between an optimal strategy and a random
strategy is far more dramatic for Player II than it is for Player I: Under
random play, Player II's winning probabilities decrease from just below
$2/3$ to just below $1/2$. 

\begin{table}[H]
\begin{center}
\renewcommand{\arraystretch}{1.1}
\begin{tabular}{|c|c|c|c|}
\hline
$n$ & $\poptopt$ & $\poptrand$ & $2^n(1/2-\poptrand)/n$\\
\hline
5 & 0.65384615 & 0.46497915 & 0.22413343 \\
6 & 0.66000000 & 0.47844501 & 0.22991993 \\
7 & 0.66326531 & 0.48728813 & 0.23244566 \\
8 & 0.66494845 & 0.49267595 & 0.23436966 \\
9 & 0.66580311 & 0.49585625 & 0.23573334 \\
10 & 0.66623377 & 0.49768613 & 0.23694059 \\
11 & 0.66644993 & 0.49872187 & 0.23796489 \\
12 & 0.66655823 & 0.49930014 & 0.23888612 \\
13 & 0.66661243 & 0.49961965 & 0.23967636 \\
14 & 0.66663954 & 0.49979460 & 0.24038122 \\
15 & 0.66665310 & 0.49988968 & 0.24100430 \\
16 & 0.66665989 & 0.49994103 & 0.24155677 \\
17 & 0.66666328 & 0.49996861 & 0.24204743 \\
18 & 0.66666497 & 0.49998335 & 0.24248627 \\
19 & 0.66666582 & 0.49999120 & 0.24287996 \\
20 & 0.66666624 & 0.49999536 & 0.24323508 \\
21 & 0.66666645 & 0.49999756 & 0.24355672 \\
22 & 0.66666656 & 0.49999872 & 0.24384934 \\
23 & 0.66666661 & 0.49999933 & 0.24411662 \\
24 & 0.66666664 & 0.49999965 & 0.24436169 \\
\hline
\end{tabular}
\caption{Optimal versus random play by Player II.}
\label{table:random-II-table1}
\end{center}
\end{table}
The last 
column of Table \ref{table:random-II-table1} suggests 
a more precise asymptotic formula for $\poptrand$, stated in the 
following conjecture.

\begin{conj}
The probability $\poptrand$ that Player II wins assuming optimal play by
Player I and random play by Player II satisfies
\begin{equation}
\label{eq:conj-opt-rand}
\frac12-\poptrand\sim \frac{1}{4}\frac{n}{2^{n}} \quad (n\to\infty).
\end{equation}
\end{conj}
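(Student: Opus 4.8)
The plan is to fix an optimal string $A$ for Player~I and compute $\poptrand$ as the average of Player~II's winning probability over a uniformly random string $B$. By Conway's formula \eqref{eq:conway-algorithm}, Player~II (holding $B$) wins with probability $P_B=u_B/(u_B+v_B)$, where $u_B=C(A,A)-C(A,B)$ and $v_B=C(B,B)-C(B,A)$. Since the diagonal term $B=A$ (and the harmless bookkeeping of averaging $\tfrac12$ over $2^n-1$ strings) contributes only $O(2^{-n})$, regardless of the tie convention, we have
\[
\tfrac12-\poptrand=\frac{1}{2^n}\sum_{B\neq A}\frac{v_B-u_B}{2(u_B+v_B)}+O(2^{-n}),
\]
and the whole problem reduces to estimating this sum. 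The guiding heuristic is that for the overwhelming majority of $B$ one has $u_B,v_B\approx 2^{n-1}$, so $\tfrac12-P_B\approx (v_B-u_B)/2^{n+1}$, and the main term will come from summing the numerators $v_B-u_B$ exactly.

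First I would record two structural facts about an optimal string $A=HTa_3\dots a_{n-3}THH$. (a) Its autocorrelation is \emph{exactly} $C(A,A)=2^{n-1}+1$: Csirik's condition (Proposition \ref{prop:csirik-theorem3}) forces the $(n-1)$-prefix to have no nontrivial period $\le n-3$, and the flanking letters $a_2=T\neq H=a_n$ rule out the one remaining candidate period, leaving only the trivial self-overlap and the length-one overlap $a_n=a_1$. (b) A lower bound $u_B\ge 2^{n-3}$ for every $B\neq A$: in $C(A,B)$ the two leading correlation bits $\delta_2,\delta_3$ cannot both equal $1$, since that would force $a_2=a_3=\dots=a_n$, again contradicting $a_2\neq a_n$; hence $C(A,B)\le 3\cdot 2^{n-3}-1$ and $u_B=2^{n-1}+1-C(A,B)\ge 2^{n-3}+2$. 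This bound keeps the denominators $u_B+v_B\ge u_B$ of order $2^n$.

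Next I would evaluate the main term using three exact counting identities. For $C(A,B)$ and $C(B,A)$, a given correlation bit $\delta_i$ equals $1$ for exactly $2^{i-1}$ strings $B$ (those in which $n-i+1$ prescribed bits are fixed), whence $\sum_B C(A,B)=\sum_B C(B,A)=\sum_{i=1}^n 2^{n-i}2^{i-1}=n\,2^{n-1}$; a parallel count (distinguishing the trivial bit $i=1$) gives $\sum_B C(B,B)=2^{2n-1}+(n-1)2^{n-1}$. Combining these with $\sum_B C(A,A)=2^n(2^{n-1}+1)$ from (a), and noting $v_A-u_A=0$, I obtain
\[
\sum_B(v_B-u_B)=\sum_B\!\big[C(B,B)-C(B,A)-C(A,A)+C(A,B)\big]=(n-3)\,2^{n-1},
\]
so the main term is $\tfrac{1}{2^{2n+1}}\sum_B(v_B-u_B)=\dfrac{n-3}{2^{n+2}}\sim \tfrac14\,\dfrac{n}{2^n}$, exactly the conjectured rate.

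Finally I would control the error by writing $\tfrac{v-u}{2(u+v)}=\tfrac{v-u}{2\cdot 2^n}+\tfrac{(v-u)(2^n-u-v)}{2\cdot 2^n(u+v)}$ and bounding the correction sum. Using $u_B+v_B\ge u_B\ge 2^{n-3}$ from (b), Cauchy--Schwarz reduces this to the second moments $\sum_B(v_B-u_B)^2$ and $\sum_B(2^n-u_B-v_B)^2$; expanding each factor in terms of $C(B,B)-2^{n-1}$, $C(A,B)$ and $C(B,A)$ and using $\#\{B:\delta_i=\delta_j=1\}\le 2^{\min(i,j)-1}$ shows each second moment is $O(2^{2n})$. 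Hence $\sum_B \text{(correction)}=O(1)$, which after the overall factor $2^{-n}$ contributes $O(2^{-n})$, giving $\tfrac12-\poptrand=\tfrac{n-3}{2^{n+2}}+O(2^{-n})$ and the stated asymptotic. I expect the main obstacle to be precisely this error analysis: the second moments are dominated by the exponentially rare but highly self-correlated strings (such as the constant strings, where $C(B,B)$ is near $2^n$), so one must check that the $O(2^{2n})$ bound truly survives these outliers — and it is the denominator bound $u_B\ge 2^{n-3}$ of step (b) that prevents a near-vanishing $u_B+v_B$ from amplifying their contribution.
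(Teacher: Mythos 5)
There is no proof in the paper to compare against: this statement is presented as a conjecture, supported only by the numerical data in Table \ref{table:random-II-table1}, so your argument goes beyond the paper. As far as I can check, your outline is correct and would settle the conjecture in the sharper form $\tfrac12-\poptrand=\tfrac{n-3}{2^{n+2}}+O(2^{-n})$. The individual steps all verify: (a) for an optimal $A$ one indeed has $C(A,A)=2^{n-1}+1$, since Proposition \ref{prop:csirik-theorem3} forces $\delta_i=0$ for $2\le i\le n-2$ via the prefix autocorrelation and $\delta_{n-1}=0$ via $a_2\ne a_n$ (which holds in both of Csirik's forms), and the same inequality $a_2\ne a_n$ shows $\delta_2,\delta_3$ cannot both be $1$ in $C(A,B)$, giving $u_B\ge 2^{n-3}+2$; (b) the counting identities $\sum_B C(A,B)=\sum_B C(B,A)=n2^{n-1}$ and $\sum_B C(B,B)=2^{2n-1}+(n-1)2^{n-1}$ are exact and yield $\sum_B(v_B-u_B)=(n-3)2^{n-1}$, hence the main term $(n-3)2^{-n-2}\sim\tfrac14 n2^{-n}$; (c) the second-moment bounds via $\#\{B:\delta_i=\delta_j=1\}\le 2^{\min(i,j)-1}$ do give $O(2^{2n})$, and combined with $u_B+v_B\ge u_B\ge 2^{n-3}$ (note also $v_B\ge 1$ since $C(B,B)\ge 2^{n-1}>C(B,A)$ for $B\ne A$) the correction sum is $O(1)$, i.e., $O(2^{-n})$ after averaging. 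Two points you should make explicit in a full writeup. First, the argument is uniform in the choice of optimal string $A$ --- only $C(A,A)=2^{n-1}+1$ and $a_2\ne a_n$ are used, and both hold for every optimal string --- which matters because the $c_n$ optimal strings need not all give exactly the same value of $\poptrand$, so one should state which convention (a fixed optimal string, or an average over them) defines $\poptrand$. Second, your refined prediction is strongly corroborated by Table \ref{table:random-II-table1}: the quantity $2^n(\tfrac12-\poptrand)-(n-3)/4$ computed from that table is essentially constant, about $0.615$, for $10\le n\le 24$, consistent with $\tfrac12-\poptrand=\tfrac{n-3}{2^{n+2}}+\gamma 2^{-n}+o(2^{-n})$. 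The only caveat is that your error analysis is a sketch rather than a complete argument, but I see no obstruction to carrying it out exactly as described.
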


\paragraph{Optimal strategy for Player II in the flipped game.}
For the flipped Penney-Ante game in which the player whose string appears \emph{last} wins we determined in Theorem
\ref{thm:flipped-optimal-moves} all optimal strategies for Player I.  
It is natural to ask what the optimal strategies for Player II are in such a flipped game.  
In analogy to the standard Penney-Ante game, a reasonable guess might be
that, given a string selected by Player I, Player II has 
a unique optimal response string consisting of the suffix of length $n-1$
of the string chosen by Player I followed by either an $H$ or a $T$. However,
Table \ref{table:flipped-optimal-responses} shows that, while such strings 
generally do perform well in the flipped game, they are not always
optimal, and that the optimal response string is not always unique.
\begin{table}[H]
	\begin{center}
		\begin{tabular}{|c|c|c|}
			\hline
			String & Best Response String(s) & Probability \\
			\hline
			\hline
		    HHHHH & HHHHH, HHHHT, TTTTT & $1/2$ \\ \hline
		    HHHHT & TTTTT & $31/46$ \\ \hline
		    HHHTH & HHTHH, HHTHT & $2/3$ \\ \hline
		    HHHTT & TTTTT & $31/44$ \\ \hline
		    HHTHH & HHHHH & $7/11$ \\ \hline
		    HHTHT & HTHTH & $10/13$ \\ \hline
		    HHTTH & HTTHT & $9/13$ \\ \hline
		    HHTTT & TTTTT & $31/40$ \\ \hline
			HTHHH & HHHHH & $3/4$ \\ \hline
			HTHHT & THHTT & $17/26$ \\ \hline
			HTHTH & HHHHH & $3/5$ \\ \hline
			HTHTT & THTTH, THTTT & $17/24$ \\ \hline
			HTTHH & HHHHH & $15/22$ \\ \hline
			HTTHT & TTTTT & $31/48$ \\ \hline
			HTTTH & HHHHH & $15/23$ \\ \hline
			HTTTT & TTTTT & $31/32$ \\ \hline
	\end{tabular}
	\end{center}
	\caption{Best response strings, and the corresponding win
	probabilities, in the flipped Penney-Ante
	game for strings of length~$5$.}
	\label{table:flipped-optimal-responses}
\end{table}

Note that in each case in 
Table \ref{table:flipped-optimal-responses}, the optimal response
strings are either of the form $HH\dots H$, $TT\dots T$, 
or consist of the last $n-1$ bits of the
string chosen by Player I followed by an $H$ or $T$.  Computer calculations show that
this pattern persists at least up to $n=10$, thus suggesting 
the following conjecture.

\begin{conj}
\label{conj:flipped-optimal-responses}
Assume Player I chooses  a string $A=a_1\dots a_n$. Then the best
response strings for Player II in the flipped Penney-Ante game are one
or more of the following four strings: 
\begin{equation}
\label{eq:flipped-optimal-responses}
HH\dots H,\quad TT\dots T,\quad a_2\dots a_{n}H,
\quad a_2\dots a_{n}T. 
\end{equation}
\end{conj}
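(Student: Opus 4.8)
The plan is to recast Player II's objective through Conway's formula \eqref{eq:qAB-definition} and then reduce the problem to a finite optimization over the possible \emph{correlation profiles} of $B$ against $A$. Player II seeks to \emph{minimize} $q(A,B)=\bigl(C(A,A)-C(A,B)\bigr)/\bigl(C(B,B)-C(B,A)\bigr)$; equivalently, by the reciprocity between the two games, she seeks the string $B$ that $A$ dominates most decisively in the standard Penney-Ante game. Two governing dichotomies organize the search. First, the second correlation bit of $C(A,B)$ equals $1$ precisely when the length-$(n-1)$ suffix of $A$ equals the length-$(n-1)$ prefix of $B$, i.e.\ exactly for the two \emph{append candidates} $B=a_2\dots a_nH$ and $B=a_2\dots a_nT$; hence $C(A,B)\ge 2^{n-2}$ for these and $C(A,B)\le 2^{n-2}-1$ for every other $B$. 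Second, the second bit of the autocorrelation $C(B,B)$ equals $1$ only for the constant strings, so $C(B,B)=2^{n}-1$ for the two \emph{constant candidates} $HH\dots H$, $TT\dots T$ and $C(B,B)\le 3\cdot 2^{n-2}-1$ otherwise. Thus minimizing $q$ pushes $C(A,B)$ up (favoring the append candidates) and $C(B,B)$ up (favoring the constant candidates), and the content of the conjecture is that one of these two competing extremes is always globally optimal.

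For the two constant candidates the optimization is explicit. Writing $s'$ and $t'$ for the lengths of the leading and trailing runs of $H$'s in $A$, one computes $C(A,HH\dots H)=2^{t'}-1$, $C(HH\dots H,A)=2^{s'}-1$, and $C(HH\dots H,HH\dots H)=2^{n}-1$, whence
\begin{equation*}
q(A,HH\dots H)=\frac{C(A,A)-2^{t'}+1}{2^{n}-2^{s'}},
\end{equation*}
together with the symmetric formula for $TT\dots T$ in terms of the runs of $T$'s. For the append candidates one has $C(A,B)=2^{n-2}$ unless $A$ carries additional short-period self-overlap, which only \emph{increases} $C(A,B)$ and hence lowers $q$; so the append candidates are favored exactly when $A$ is close to $2$-periodic.

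The obvious first attempt---bounding $q$ from below for every non-candidate using the two dichotomies \emph{separately}---is not strong enough. A non-candidate satisfies both $C(A,B)\le 2^{n-2}-1$ and $C(B,B)\le 3\cdot 2^{n-2}-1$, giving only
\begin{equation*}
q(A,B)\ge\frac{C(A,A)-2^{n-2}+1}{3\cdot 2^{n-2}-1}\ge\frac{2^{n-2}+1}{3\cdot 2^{n-2}-1}\xrightarrow[n\to\infty]{}\tfrac13 .
\end{equation*}
This bound is too weak, because the optimum itself can exceed $1/3$. For instance, for $A=HHHTT$ the constant candidate $TT\dots T$ is optimal with $q=13/31\approx0.42$, whereas for $A=HHTHT$ the append candidate $HTHTH$ is optimal with $q=3/10=0.30$. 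In the first case the coarse lower bound lies \emph{below} the candidate value, so one cannot conclude optimality of the candidates from the two dichotomies alone. The missing ingredient is that a single string $B$ cannot simultaneously make $C(A,B)$ close to $2^{n-2}$, keep $C(B,A)$ small, \emph{and} make $C(B,B)$ close to $3\cdot 2^{n-2}$: these three correlations are strongly coupled.

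The real proof must therefore exploit this coupling, and the natural framework is the correlation theory of Guibas and Odlyzko \cite{guibas-odlyzko1981}. I would stratify the strings $B$ by their joint correlation profile with $A$---the set of shifts at which a suffix of $A$ meets a prefix of $B$ (encoding $C(A,B)$), together with the analogous data for $C(B,A)$---and use the forward-propagation rule \cite[Theorem 5.1]{guibas-odlyzko1981} to determine which profiles are realizable and, within each realizable profile, the maximal possible autocorrelation $C(B,B)$. Optimizing $q$ profile-by-profile would then reduce the conjecture to checking that the minimizing profile always corresponds to an append or a constant candidate. The main obstacle is precisely this last combinatorial step: the append-versus-constant trade-off switches according to the fine periodicity structure (the set of periods) of $A$, and controlling the extremal $C(B,B)$ uniformly across all profiles and all lengths $n$ is exactly the delicate combinatorics-of-words input that the two elementary dichotomies cannot supply. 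Pinning this down uniformly in $n$ is what has so far kept the statement at the level of a conjecture.
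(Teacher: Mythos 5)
You have not proved this statement---but neither does the paper: this is Conjecture \ref{conj:flipped-optimal-responses}, and the paper's only support for it is the table of best responses for $n=5$ (Table \ref{table:flipped-optimal-responses}) together with the remark that computer calculations confirm the pattern up to $n=10$. So there is no proof of the paper's to compare yours against, and your closing admission that the final combinatorial step is open is an accurate diagnosis of the status of the problem itself, not just of your attempt. Within that caveat, the verifiable content of your proposal is correct and consistent with the paper's data. Minimizing $q(A,B)$ in \eqref{eq:qAB-definition} is the right formulation of Player II's objective in the flipped game. Your two dichotomies are sound: the second bit of $C(A,B)$ is $1$ exactly for the two append candidates, giving $C(A,B)\le 2^{n-2}-1$ for all other $B\ne A$; and the second bit of $C(B,B)$ is $1$ only for constant strings, giving $C(B,B)\le 3\cdot 2^{n-2}-1$ otherwise, which is precisely the bound \eqref{eq:CBB-bound} the paper uses in proving Lemma \ref{lem:flipped-game-odds}. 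Your sample values also check out: $q(HHHTT,TTTTT)=13/31$ and $q(HHTHT,HTHTH)=3/10$ match the probabilities $31/44$ and $10/13$ in Table \ref{table:flipped-optimal-responses}, and $HTHTH$ is indeed the append candidate for $HHTHT$. Your observation that the decoupled bound degenerates to roughly $1/3$ while the true optimum can exceed $1/3$ (e.g., $13/31$) is a substantive point: it shows the two dichotomies alone genuinely cannot isolate the four candidate strings, so any proof must exploit the coupling among $C(A,B)$, $C(B,A)$, and $C(B,B)$.

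The gap is exactly where you place it, and it is real: the proposed stratification by joint correlation profile, with the forward propagation rule of \cite[Theorem 5.1]{guibas-odlyzko1981} used to bound the extremal $C(B,B)$ within each realizable profile, is a plausible program but is nowhere carried out, and controlling that trade-off uniformly in $n$ and in the period structure of $A$ is precisely the missing combinatorics-of-words input. Two smaller cautions if you pursue it. First, your claim that extra self-overlap of $A$ ``only increases $C(A,B)$ and hence lowers $q$'' is valid for fixed $A$ with the denominator held fixed, but self-overlap of $A$ also inflates $C(A,A)$ in the numerator, so the comparison between append and constant candidates is not monotone in the periodicity of $A$ in the way the phrasing suggests. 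Second, the candidate list can include $B=A$ itself (for $A=HH\dots H$ the append candidate $a_2\dots a_nH$ equals $A$, and Table \ref{table:flipped-optimal-responses} lists it as a best response with probability $1/2$), whereas Conway's formula \eqref{eq:qAB-definition} applies only to distinct strings; any rigorous treatment needs a convention for that tie case. In short: no error of substance, correct preliminary reductions, and an honest identification of the open step---which is the same step the paper leaves open.
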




\begin{thebibliography}{10}\footnotesize

\bibitem{blom-thorburn1982}
G.~Blom and D.~Thorburn, How many random digits are required
  until given sequences are obtained?, \textit{J. Appl. Probab.}
  \textbf{19} (1982), 518--531. 

\bibitem{breen1985}
S.~Breen, M.~S.~Waterman, and N.~Zhang, {Renewal theory for
  several patterns}, \textit{J. Appl. Probab.} \textbf{22} (1985), 228--234.

\bibitem{chen-zane1979}
R.~Chen and A.~Zame, {On fair coin-tossing games}, \textit{J. Multivariate
  Anal.} \textbf{9} (1979), 150--156. 

\bibitem{csirik1992}
J.~A.~Csirik, {Optimal strategy for the first player in the
  {P}enney {A}nte game}, \textit{Combin. Probab. Comput.}
  \textbf{1} (1992), 311--321. 

\bibitem{felix2006}
D.~Felix, {Optimal {P}enney {A}nte strategy via correlation polynomial
  identities}, \textit{Electron. J. Combin.}
  \textbf{13} (2006), R35. 

\bibitem{gardner1974}
M.~Gardner, {On the paradoxical situations that arise from
  nontransitive relations}, \textit{Scientific American} \textbf{231} (1974),
  no.~4, 120--125.

\bibitem{guibas-odlyzko1981}
L.~J. Guibas and A.~M. Odlyzko, {String overlaps, pattern matching, and
  nontransitive games}, 
  \emph{J. Combin. Theory Ser. A} \textbf{30} (1981), 
  183--208. 

\bibitem{li1980}
S.-Y.~Li, {A martingale approach to the study of occurrence of
  sequence patterns in repeated experiments}, 
  \emph{Ann. Probab.} \textbf{8} (1980),
  1171--1176. 

\bibitem{noonan-zeilberger1999}
J.~Noonan and D.~Zeilberger, {The {G}oulden-{J}ackson cluster method:
  extensions, applications and implementations}, 
  \emph{J. Differ. Equations Appl.}
  \textbf{5} (1999), 355--377. 

\bibitem{oeis}
{OEIS Foundation Inc.}, \emph{The {O}n-{L}ine {E}ncyclopedia of {I}nteger
  {S}equences}, \url{http://oeis.org}, 2020.

\bibitem{penney1969}
W.~Penney, {Problem 95: Penney-{A}nte}, 
\emph{J. Recreat. Math.} \textbf{7}
  (1974), 321.

\bibitem{pozdnyakov-kulldorff2006}
V.~Pozdnyakov and M.~Kulldorff, 
{Waiting times for patterns and
  a method of gambling teams}, 
  \emph{Amer. Math. Monthly} \textbf{113} (2006), 
  134--143. 

\end{thebibliography}
\end{document}